\def\sup{\mathop{\hbox{sup}}}
\def\R{\mathbb{R}}
\def\Q{\mathbb{Q}}
\def\N{\mathbb{N}}
\def\C{\mathbb{C}}
\renewcommand\Re{\mbox{Re\,}}
\newtheorem{thm}{Theorem}[section]
\newtheorem{lemma}[thm]{Lemma}
\newtheorem{lem}[thm]{Lemma}
\newtheorem{corollary}[thm]{Corollary}
\newtheorem{prop}[thm]{Proposition}
\newtheorem{defprop}[thm]{Definition-Proposition}
\newtheorem{question}[thm]{Question}
\theoremstyle{remark}
\newtheorem{rem}[thm]{Remark}
\theoremstyle{definition}
\newtheorem{dfn}[thm]{Definition}
\newcommand{\dom}{\mathrm{dom}}
\newcommand{\cS}{\mathcal{S}}
\newcommand{\Tr}{\mathrm{Tr}}
\newcommand{\cH}{\mathcal{H}}
\newcommand{\cR}{\mathcal{R}}
\newcommand{\cM}{\mathcal{M}}
\newcommand{\cN}{\mathcal{N}}
\newcommand{\cU}{\mathcal{U}}
\newcommand{\CB}{\mathcal{CB}}
\newcommand{\cL}{\mathcal{L}}
\newcommand{\MCB}{M_{\mathcal{CB}}A}
\newcommand{\nphi}{\mathfrak{n}_\varphi}
\newcommand{\mphi}{\mathfrak{m}_\varphi}
\title[Schur and Fourier multipliers of an amenable group]{Schur and Fourier multipliers of an amenable group acting on non-commutative $L^p$-spaces}
\author{Martijn Caspers}
\author{Mikael de la Salle}
 \address{M. Caspers, Laboratoire de Math\'ematiques, Universit\'e de Franche-Comt\'e, 16 Route de Gray, 25030 Besan\c con, France}
 \email{martijn.caspers@univ-fcomte.fr}
 \address{M. de la Salle, CNRS, Laboratoire de Math\'ematiques, Universit\'e de Franche-Comt\'e, 16 Route de Gray, 25030 Besan\c con, France}
 \email{mikael.de\_la\_salle@univ-fcomte.fr}
\date{\noindent \today.   {\it MSC2010}: 43A15, 46B08, 46B28, 46B70.  {\it Keywords}: non-commutative $L^p$-spaces, Schur multiplier, Fourier multipliers, amenability.\\ 
\\
The first author was supported by the ANR project: ANR-2011-BS01-008-01.\\
The scond author was partially supported by the ANR projects NEUMANN and OSQPI.
 }  
\begin{document}

\begin{abstract}
Consider a completely bounded Fourier multiplier $\phi$ of a locally
compact group $G$, and take $1 \leq p \leq \infty$. One can associate
to $\phi$ a Schur multiplier on the Schatten classes $\cS_p(L^2 G)$,
as well as a Fourier multiplier on $L^p(\cL G)$, the non-commutative
$L^p$-space of the group von Neumann algebra of $G$. We prove that the
completely bounded norm of the Schur multiplier is not greater than
the completely bounded norm of the $L^p$-Fourier multiplier. When $G$
is amenable we show that equality holds, extending a result by
Neuwirth and Ricard to non-discrete groups.

For a discrete group $G$ and in the special case when $p\neq 2$ is an even
integer, we show the following. If there exists a map between
$L^p(\cL G)$ and an ultraproduct of $L^p(\cM) \otimes \cS_p(L^2G)$
that intertwines the Fourier multiplier with the Schur multiplier,
then $G$ must be amenable. This is an obstruction to extend the
Neuwirth-Ricard result to non-amenable groups.
\end{abstract}

\maketitle

\section{Introduction}

This paper studies the close connection between Schur and Fourier multipliers of locally compact groups $G$. In particular, we are interested in the relation between the completely bounded norms of such multipliers.  

Given a bounded function $\phi:G \to \C$, the associated Fourier multiplier $T_\phi$, when it exists, is the unique weak-* continuous map on the von Neumann algebra $\cL G$ of $G$ extending $\lambda_g \mapsto \phi(g)\lambda_g$. The associated Schur multiplier $M_\phi$, when it exists, is the unique weak-* continuous map on $B(L^2G)$ extending $(a_{s,t})_{s,t \in G} \in \cS_2(L^2 G) \mapsto (\phi(s t^{-1}) a_{s,t})_{s,t \in G}$. Bo{\.z}ejko and Fendler proved \cite{BozFen84} that $M_\phi$ indeed determines a bounded map if and only if $T_\phi$ defines a completely bounded map, and in this case the completely bounded norms coincide. The function $\phi$ is then called a completely bounded Fourier multiplier and the space of such $\phi$ is denoted $\MCB(G)$. Note that $T_\phi$ is the restriction of $M_\phi$ to $\cL G \subset B(L^2G)$.

In this paper we investigate the Bo{\.z}ejko-Fendler result for $L^p$-multipliers. When $G$ is discrete and $\phi \in \MCB(G)$, there is no technical difficulty for defining the Fourier multiplier $T_\phi^p:L^p(\cL G) \to L^p(\cL G)$ and the Schur multiplier $M_\phi^p:\cS_p(\ell^2 G) \to \cS_p(\ell^2 G)$ (they are just the extension of $T_\phi$/ restriction of $M_\phi$). When $G$ is not discrete, the definition of the Schur multiplier $M_\phi^p:\cS_p(L^2 G) \to \cS_p(L^2 G)$ is known (see Subsection \ref{subsection=Schur}), and we define in Subsection \ref{subsection=Fourier} the $L^p$-Fourier multiplier $T_\phi^p:L^p(\cL G) \to L^p(\cL G)$ spatially \cite{Con}, \cite{Hilsum}.

We are interested in the following question.

\begin{question}\label{Que-One}
Let $G$ be a locally compact group. Is it true that,
\[
\Vert T_\phi^p \Vert_{\CB(L^p(\cL G))} = \Vert M_\phi^p \Vert_{\CB(\cS_p(L^2G))}.
\]
for all completely bounded Fourier multipliers $\phi$ of $G$?
\end{question}

\vspace{0.3cm}

There are several motivations to study this problem. One is that a positive answer would imply that for a discrete group $G$, the property ${\rm AP}^{Schur}_{p,cb}$ considered in \cite{LafSal}, see also \cite{Laa}, is an invariant of the group von Neumann algebra of $G$. Furthermore, one is often interested in strict estimates of the norm of the transfered Fourier multiplier, see for example \cite[Section 7]{ChenXuYin}. 

\vspace{0.3cm}

 In \cite{NeuRic}, Neuwirth and Ricard studied Question \ref{Que-One} for discrete groups. They noted that the inequality $\geq$ always holds, and they proved the other inequality in the case $G$ is amenable. This is an $L^p$-version of the Bo{\.z}ejko-Fendler result. Their proof relies on the fact that the amenability of $G$ allows them to construct a completely isometric embedding of $L^p(\cL G)$ into an ultrapower of $\cS_p(\ell^2 G)$ that intertwines Fourier and Schur multipliers.

Marius Junge (personal communication) pointed out to the second-named author that a positive answer to Question \ref{Que-One} would also follow from the existence of a completely isometric embedding of $L^p(\cL G)$ into an ultraproduct of $L^p(\cM_n) \otimes \cS_p(L^2 G)$ for some net of von Neumann algebras $\cM_n$, that intertwines Fourier and Schur multipliers. He suggested that some weaker approximation property (as exactness) might provide such an embedding. Our first result (Theorem \ref{Thm-CounterExample}) partially answers his suggestion negatively: if $p\neq 2$ is an even integer, $G$ is a discrete group, and such an embedding exists, then $G$ is amenable. This bad news was the motivation to study Question \ref{Que-One} for general locally compact groups.

The main part of the present paper is to extend the Neuwirth-Ricard result to arbitrary locally compact groups: we first prove in Theorem \ref{Thm-TransferenceEstimate} by a transference technique that the inequality $\geq$ always holds. We then prove the inequality $\leq$ for $G$ amenable in Theorem \ref{thm=embedding_in_ultraproduct} and its Corollary \ref{Cor-MainEquality}. Our proof is close to the proof given in \cite[Section 3]{NeuRic}, but we encounter several technical issues that we have to overcome, mainly in the case of non-unimodular groups.

Let us explain a bit more in details the technical complications that we meet.
Recall \cite{PisXu}   that to a von Neumann algebra $\cM$ one can associate non-commutative $L^p$ spaces $L^p(\cM)$ for $1 \leq p \leq \infty$. There are several different ways to define $L^p(\cM)$, but they all yield   (completely) isometric spaces. When $\cM = B(\cH)$, $L^p(\cM)$ is most naturally realized as the Schatten class $\cS_p(\cH)$ (which is contained in $B(\cH)$). 
When $\cM$ carries a normal faithful finite trace $\tau$ (for example $\cM = \cL G$ for a discrete group $G$), the natural choice is to see $L^p(\cM)$ as the completion of $\cM$ for the norm $\|x\|_p = \tau(|x|^p)^{1/p}$ ($L^p(\cM)$ contains $\cM$). When $G$ is a locally compact group, there is a natural weight on $\cL G$, the Plancherel weight, which is a trace if and only if $G$ is unimodular. Although $\cL G$ is often semi-finite, it is more natural to view the $L^p$-spaces of $\cL G$ as contructed from this weight. As is well-known, the non-commutative $L^p$-spaces then have the property that   $L^p$ and $L^q$ do not naturally intersect for $p \neq q$. It turns out that the most convenient way to define the $L^p$-Fourier multiplier is by realizing $L^p(\cL G)$ in the Connes-Hilsum construction (Subsection \ref{subsect=Connes-Hilsum}) associated to the Plancherel weight on $\cL G'  = \cR G$ (the von Neumann algebra of $L^2G$ associated to the right translations). Indeed, this realizes $L^p(\cL G)$ as a space of unbounded operators on $B(L^2 G)$, and the naive definition of $T_\phi^p$ would just be the restriction of $T_\phi$ to $L^p(\cL G)$. Since in general (\emph{i.e.} unless $G$ is unimodular) $L^p(\cL G) \cap B(L^2(G)) = \{0\}$ this is a bit to naive, and the precise definition along these lines is given in the Definition-Proposition \ref{dfprop=def__Tphip}.

An important observation we make is that although $L^p(\cL G)$ might not contain nonzero bounded operators, its corners in fact lie in the Schatten class $\cS_p$. More precisely, for $F \subset G$ relatively compact we can (carefully) define the corner $P_F x P_F$ of any element $x \in L^p(\cL G)$, and we prove in Theorem \ref{thm=contraction_well_defined} that $P_F x P_F \in \cS_p(L^2F)$ with norm at most $|F|^{1/p}\|x\|_{L^p(\cL G)}$. When $G$ is amenable, one then chooses a F\o{}lner net $F_n \subset G$ and consider the maps $x \mapsto |F_n|^{-1/p} P_{F_n} x P_{F_n}$.

In all the definitions and statements we give, $L^p(\cL G)$ will be the Connes-Hilsum space, but the theory of interpolation \cite{TerpII}, \cite{Izumi} of non-commutative $L^p$-spaces associated to a weight plays an important role in our proofs. This is a bit technical. Therefore, to make the paper readable we collect (and prove when necessary) all the results we need from interpolation in Theorem \ref{thm=interpolation_BB}, and in the proofs we use this theorem as an abstract black box.
\vspace{0.3cm}

The structure of the paper is as follows. Section \ref{Sect-Impossible}, which is independent from the rest of the paper, gives a sufficient (and necessary) condition for a discrete group to be amenable in terms of intertwiners of Schur and Fourier multipliers. Section \ref{Sect-Preliminaries} recalls the necessary results and fixes the notation. In Section \ref{Sect=transference}, we prove that the inequality $\geq$ in Question \ref{Que-One} holds for any locally compact group. Section \ref{Sect=amenable} gives the affirmative answer to Question \ref{Que-One} for amenable groups. The last Section \ref{Sect=interpolation} on interpolation states and then proves all the results needed in the first sections on interpolation of non-commutative $L^p$-spaces.

\section{Characterizing amenability by intertwining Schur and Fourier multipliers}\label{Sect-Impossible}
 
In this section we prove the following characterization of the amenability of a discrete group. It takes away our hope to give a direct positive answer to Question \ref{Que-One} for non-amenable discrete groups.

Recall that the Schur multiplier $M_\phi^p$ and the Fourier multiplier $T_\phi^p$ were defined in the introduction for discrete groups. We define them more generally in Section \ref{Sect-Preliminaries}. 

\begin{thm}\label{Thm-CounterExample}
 Let $\Gamma$ be a discrete group, let $\cH$ be a Hilbert space and let $p\geq 4$ be an even integer. Let $\cU$ be a non-trivial ultrafilter on some set. Assume that there exists an isometric embedding
 \begin{equation}\label{Eq-IntertwinerCounter}
j_p: L^p(\cL \Gamma) \to \prod_{\mathcal U} \cS_p(\ell^2 \Gamma \otimes
  \cH)
\end{equation}
such that for every $\phi :\Gamma \to \C$ of finite support,
  $j_p \circ T_\phi^p = (\prod_{\cU} M_\phi^{p} \otimes id) \circ j_p$. Then, $\Gamma$ is amenable.

Conversely, if $\Gamma$ is amenable such an embedding exists for all $1 \leq p \leq \infty$ and we can take $\cH=\C$.
\end{thm}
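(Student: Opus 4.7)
The plan handles both directions. The converse (amenable $\Rightarrow$ embedding exists for all $p$ with $\cH = \C$) is the F\o{}lner-set construction of Neuwirth--Ricard. With $(F_n)$ F\o{}lner in $\Gamma$, set $j_p(x) := (|F_n|^{-1/p}\, P_{F_n}\, x\, P_{F_n})_n$. The F\o{}lner property yields $|F_n|^{-1} \Tr(|P_{F_n} x P_{F_n}|^p) \to \tau(|x|^p)$ for $x$ of finite Fourier support, so $j_p$ is isometric; intertwining is immediate from $(P_{F_n} x P_{F_n})_{s,t} = \hat x(st^{-1})$ for $s, t \in F_n$, which is exactly the matrix-entry pattern on which a Schur multiplier of symbol $\phi$ acts by $\phi(st^{-1})$.

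For the main direction, fix $p = 2k$, $k \geq 2$, and choose representatives $(y_n^g)$ of $j_p(\lambda_g)$. The intertwining with each Schur projection $M_{1_{\{h\}}}^p \otimes \mathrm{id}_\cH$ (valid as $1_{\{h\}}$ has finite support) yields $(M_{1_{\{h\}}}^p \otimes \mathrm{id})(y_n^g) - \delta_{g,h}\, y_n^g \to 0$ in $\cS_p$ along $\cU$; after replacing $y_n^g$ by its image under $M_{1_{\{g\}}}^p \otimes \mathrm{id}_\cH$ (which gives the same ultraproduct class) we may assume $y_n^g$ is supported exactly on the $g$-diagonal,
\[
y_n^g = \sum_{u \in \Gamma} e_{gu,u} \otimes \alpha_g^{(u,n)}, \qquad \alpha_g^{(u,n)} \in \cS_p(\cH), \qquad \sum_u \|\alpha_g^{(u,n)}\|_{\cS_p(\cH)}^p \xrightarrow{\cU} 1.
\]
For $x = \sum_g c_g \lambda_g$ of finite support, expand both sides of $\|x\|_p^p = \lim_\cU \|\sum_g c_g y_n^g\|_p^p$ as polynomials of degree $2k$ in $\{c_g, \overline{c_g}\}$ and match coefficients, each indexed by a cyclic tuple $(g_1, \ldots, g_{2k})$ with $g_1^{-1} g_2 \cdots g_{2k-1}^{-1} g_{2k} = e$.

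For the monomial $|c_{a_1}|^2 \cdots |c_{a_k}|^2$ (distinct $a_i$), the LHS coefficient is a positive integer $C$ counting compatible cyclic orderings; the RHS is a sum of $C$ nonnegative contributions, each bounded above by $1$ via noncommutative H\"older in $\cS_k(\cH)$ followed by H\"older in $\ell^k(\Gamma)$. Matching forces each contribution to equal $1$, saturating both H\"older inequalities. From the ``diagonal'' ordering $(a_1, a_1, \ldots, a_k, a_k)$ one deduces that $|\alpha_g^{(u,n)}|^2$ is asymptotically independent of $g$; from a ``shift'' ordering (the $(g,h,h,g)$-type for $k=2$, and its obvious analogues for larger $k$) one deduces $\alpha_g^{(u,n)}(\alpha_g^{(u,n)})^* \approx \alpha_e^{(gu,n)}(\alpha_e^{(gu,n)})^*$. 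Setting $\eta^{(n)}(u) := \|\alpha_e^{(u,n)}\|_{\cS_p(\cH)}$ and using the elementary inequality $(a-b)^2 \leq |a^2 - b^2|$ for $a, b \geq 0$ together with triangle inequalities on Schatten norms, both relations descend to $\|\lambda_g \eta^{(n)} - \eta^{(n)}\|_{\ell^p(\Gamma)} \to 0$ along $\cU$ for every $g \in \Gamma$, while $\|\eta^{(n)}\|_{\ell^p(\Gamma)} \to 1$. This is Reiter's $P_p$ condition, equivalent to amenability of $\Gamma$.

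The main obstacle is the third paragraph: selecting workable cyclic orderings from the combinatorially rich family of identities, proving saturation separately for each of the $C$ contributions with operator-valued (not scalar) weights at the ultraproduct level, and carefully converting the noncommutative H\"older saturation into scalar almost-invariance of $\eta^{(n)}$. The latter step relies on the fact that Schatten-$2$ norms of the positive parts $|\alpha_g^{(u,n)}|^2$ and $\alpha_g^{(u,n)}(\alpha_g^{(u,n)})^*$ are squared Schatten-$p$ norms of $\alpha_g^{(u,n)}$, so small differences in $\cS_{p/2}$ control small differences in $\cS_p$ via the $(a-b)^2 \leq |a^2 - b^2|$ trick.
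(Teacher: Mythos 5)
Your proposal is correct and follows essentially the same route as the paper: reduce each $j_p(\lambda_g)$ to a diagonally supported element via the intertwining with $\delta_g$, expand $\|x\|_p^p=\lim_\cU\|j_p(x)\|_p^p$ for finitely supported $x$, observe that each trace term has modulus at most $1$ by H\"older so that the equality of the two sides forces every term to converge to $1$, and conclude almost invariance of the scalar vectors $\eta^{(n)}$ (the paper phrases the conclusion as almost invariant vectors in $\ell^2\Gamma$ rather than Reiter's $P_p$, and uses positive coefficients $x_\gamma\ge 0$ in place of coefficient matching, but these are equivalent; also note the trace contributions are complex, not nonnegative, so you should argue via ``modulus at most $1$ summing to $C$''). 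The one simplification you are missing is that the operator-level consequences of H\"older saturation that you flag as the main obstacle are not needed: as in the paper's Lemma \ref{lemma=uniform_convexity}, one can pass to the scalar vectors $\eta$ \emph{before} exploiting saturation, since $|\Tr(\alpha_1\alpha_2\alpha_3\alpha_4)|\le\prod_i\|\alpha_i\|_{\cS_4(\cH)}$ applied entrywise reduces everything to near-equality in the Cauchy--Schwarz inequality in $\ell^2\Gamma$, where uniform convexity is elementary.
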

\begin{rem} 
In fact the proof below shows more generally that the same 
conclusion holds if there is such an embedding of $L^p(\cL(\Gamma))$
into $\prod_{\mathcal U} L^p( B(\ell^2 \Gamma)   \otimes  
\cM_\alpha)$ with a net $(\cM_\alpha,\tau_\alpha)$ of semi-finite von Neumann algebras.
\end{rem} 
\begin{proof}
To keep the notation simple, we consider the case $p=4$ and write $j$ for $j_4$. The same proof works for every $p \in 2\N$. 

Fix $\gamma \in \Gamma$, and let $(\Xi_\alpha)$ be a representative of
$j(\lambda(\gamma))$. Express that $j \circ T_\phi^4 = (\prod_{\cU} M_\phi^{p} \otimes id) \circ j$ for $\phi = \delta_\gamma$ a Dirac mass point. This
implies that $j(\lambda(\gamma))=((M_{\delta_\gamma}^4 \otimes id)
(\Xi_\alpha))_{\alpha}$, so that we can assume that
$(M_{\delta_\gamma} \otimes id) (\Xi_\alpha) = \Xi_\alpha$ for all
$\alpha$. In other words, if we see $\Xi_\alpha$ as an infinite matrix
with values in $\cS_p(\cH)$, its $(s,t)$ entry is zero unless
$st^{-1}=\gamma$. Equivalently, there is an element $\xi \in
\ell^p(\Gamma;\cS_p(\cH))\subset \cS_p(\ell^2 \Gamma \otimes \cH)$ such that
$\Xi_\alpha = (\lambda(\gamma) \otimes id) \xi$. Let us denote this
$\xi$ by $\xi_\gamma^{(\alpha)}$. Using that $1 =
\|\lambda(\gamma)\|_{L^4(\cL(G))} = \lim_{\alpha, \mathcal U}
\|\xi_\gamma^{(\alpha)}\|_{\ell^4(\Gamma; \cS_4(\cH))}$, we can as well assume that
$\|\xi_\gamma^{(\alpha)}\|_{\ell^4(\Gamma; \cS_4(\cH))}=1$. Since $\gamma \in\Gamma$
was arbitrary, we can choose such a representative for all $\gamma$.

Let $\eta_\gamma^{(\alpha)}$ be the unit vector in $\ell^4 \Gamma$
defined by $\eta_\gamma^{(\alpha)} =
(\|\xi_\gamma^{(\alpha)}(s)\|_{\cS_4(\cH)})_{s \in \Gamma}$. We claim that,
\begin{equation}\label{eq=invariant}
\forall \gamma,s \in \Gamma, \lim_{\alpha, \mathcal U} \|\lambda_s 
(\eta_\gamma^{(\alpha)})^2 - (\eta_\gamma^{(\alpha)})^2\|_{\ell^2\Gamma} = 0.
\end{equation}
In particular, $\lambda$ has almost invariant vectors in
$\ell^2\Gamma$, which implies that $\Gamma$ is amenable.

To prove the claim, take $x = \sum_\gamma x_\gamma \lambda_\gamma$
arbitrary in the group algebra $\C[\Gamma]$ with $x_\gamma \geq 0$. Then, we find $x^* x = \sum_s (\sum_\gamma
\overline{x_\gamma} x_{\gamma s}) \lambda_s$, so that
\[
\|x\|_{L^4(\cL G)}^4 = \sum_{s,\gamma,\widetilde \gamma} x_\gamma \overline{x_{\gamma s}} \overline{x_{\widetilde \gamma}} x_{\widetilde \gamma s}.
\]
To compute $\|j(x)\|^4_{\prod_{\mathcal U} \cS_4(\ell^2 \Gamma \otimes
  \cH)} $ and to simplify the notation, we shall denote,
for $s \in\Gamma$ and $\xi \in \ell^p(\Gamma;\cS_p(\cH))$, $s \cdot \xi=
(\xi(s^{-1}t))_{t \in \Gamma}$. We shall also use the relation $s \cdot
\xi = (\lambda_s \otimes id) \xi (\lambda_s^{\ast} \otimes id)$. Hence,
\[
j(x)^* j(x) = \left( \sum_s (\lambda_s \otimes id) (\sum_\gamma \overline{x_\gamma}x_{\gamma s}(s^{-1} \cdot \xi_\gamma^{\alpha})^* \xi_{\gamma s}^\alpha)\right)_{\alpha},
\]
so that
\begin{eqnarray*} 
  \|j(x)\|_{\prod_{\mathcal U} \cS_4(\ell^2 \Gamma \otimes
  \cH)} ^4 &=& \lim_{\alpha, \mathcal U} \sum_{s,\gamma,\widetilde \gamma} x_\gamma \overline{x_{\gamma s}} \overline{x_{\widetilde \gamma}} x_{\widetilde \gamma s} \Tr\left(\xi_{\gamma s}^{(\alpha)*} s^{-1}.(\xi_\gamma^{(\alpha)} \xi_{\widetilde \gamma}^{(\alpha)*}) \xi_{\widetilde \gamma s}^{(\alpha)} \right).
\end{eqnarray*}
Here, $\Tr$ is the natural (semi-finite) trace on $\ell^\infty(\Gamma; B(\cH))$. 
By $x_\gamma\geq 0$ and H\"older's inequality, we have 
\begin{eqnarray*}  \|j(x)\|_{\prod_{\mathcal U} \cS_4(\ell^2 \Gamma \otimes
  \cH)} ^4 & = & \sum_{s,\gamma,\widetilde \gamma} x_\gamma {x_{\gamma s}} {x_{\widetilde \gamma}} x_{\widetilde \gamma s} \lim_{\alpha, \cU} \Re \: \Tr\left(s^{-1}.(\xi_\gamma^{(\alpha)} \xi_{\widetilde \gamma}^{(\alpha)*}) \xi_{\widetilde \gamma s}^{(\alpha)}\xi_{\gamma s}^{(\alpha)*}\right)\\
& \leq & \sum_{s,\gamma,\widetilde \gamma} x_\gamma {x_{\gamma s}} {x_{\widetilde \gamma}} x_{\widetilde \gamma s} 1= \|x\|_{L^4(\cL G)}^4.
\end{eqnarray*}
Hence if $\gamma,\widetilde \gamma,\gamma s,\widetilde \gamma s$ belong to the support of $x$,
\[\lim_{\alpha,\cU} \Re \: \Tr\left(s^{-1}.(\xi_\gamma^{(\alpha)} \xi_{\widetilde \gamma}^{(\alpha)*}) \xi_{\widetilde \gamma s}^{(\alpha)}\xi_{\gamma s}^{(\alpha)*}\right) = 1,\]
so that by the uniform convexity Lemma \ref{lemma=uniform_convexity} below 
\begin{equation}\label{eq=limit_terme_a_terme}
\lim_{\alpha, \mathcal U}\|\lambda_s (\eta_{\widetilde \gamma s}^{(\alpha)})^2 - (\eta_{\gamma}^{(\alpha)})^2\|_{\ell^2(\Gamma)} = 0.
\end{equation}
Since $x$ was arbitrary, \eqref{eq=limit_terme_a_terme} holds for all
$s,\gamma,\widetilde \gamma \in \Gamma$. This proves
\eqref{eq=invariant}, and hence that $\Gamma$ is amenable.

The converse was proved in \cite{NeuRic}.
\end{proof}

\begin{lemma}\label{lemma=uniform_convexity}
Let $\Gamma$ be a discrete group and let $\cH$ be a Hilbert space. 
For all $\epsilon >0$, there exists $\delta>0$ such that the following
holds: Let $s \in \Gamma$ and $\xi_i \in \ell^4(\Gamma;\cS_4(\cH))$, $i=1\dots 4$
be unit vectors, and denote by $\eta_i$ the unit vector
$(\|\xi_i(t)\|_{\cS_4(\cH)})_{t \in \Gamma}$ in $\ell^4 \Gamma$. If
\[
{\rm Re}\: \Tr\left(s^{-1}.(\xi_1 \xi_2) \xi_3 \xi_4\right) \geq 1-\delta,
\]
then $\|\lambda_s \eta_3^2 - \eta_1^2\|_{\ell^2\Gamma} \leq \epsilon$.
\end{lemma}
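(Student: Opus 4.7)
The strategy is to drop the operator-algebraic content as early as possible via the non-commutative H\"older inequality, and then extract the required closeness from a double application of Cauchy--Schwarz in $\ell^2(\Gamma)$. The whole argument is a quantitative version of the equality case in a four-fold H\"older inequality.

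First, pointwise in $t$ the non-commutative H\"older inequality $|\mathrm{tr}(ABCD)| \le \|A\|_{\cS_4}\|B\|_{\cS_4}\|C\|_{\cS_4}\|D\|_{\cS_4}$ gives
$$\bigl|\mathrm{tr}\bigl(\xi_1(st)\xi_2(st)\xi_3(t)\xi_4(t)\bigr)\bigr| \le \eta_1(st)\eta_2(st)\eta_3(t)\eta_4(t).$$
Summing over $t$ and combining with the hypothesis on the real part of $\Tr$ yields the scalar lower bound
$$1 - \delta \;\le\; \sum_{t\in\Gamma} \eta_1(st)\eta_2(st)\eta_3(t)\eta_4(t).$$

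Next, perform the substitution $t' = st$ and set $a = \eta_1$, $b = \eta_2$, $c = \lambda_s \eta_3$, $d = \lambda_s \eta_4$; these are four unit vectors in $\ell^4(\Gamma)$, and the previous inequality reads $1 - \delta \le \sum_{t'} a(t')b(t')c(t')d(t')$. Factoring $abcd = (ac)\cdot(bd)$ and applying Cauchy--Schwarz in $\ell^2(\Gamma)$ yields
$$1 - \delta \;\le\; \Bigl(\sum a^2 c^2\Bigr)^{1/2}\Bigl(\sum b^2 d^2\Bigr)^{1/2},$$
and a further Cauchy--Schwarz (applied to $b^2,d^2 \in \ell^2(\Gamma)$, both of norm $1$) bounds $\sum b^2 d^2 \le 1$. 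Hence $\sum a^2 c^2 \ge (1-\delta)^2 \ge 1 - 2\delta$.

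Finally, since $\|a^2\|_{\ell^2} = \|c^2\|_{\ell^2} = 1$, expanding the square gives
$$\|\eta_1^2 - \lambda_s \eta_3^2\|_{\ell^2(\Gamma)}^2 = \|a^2 - c^2\|_{\ell^2}^2 = 2 - 2\sum a^2 c^2 \;\le\; 4\delta,$$
so choosing $\delta = \epsilon^2/4$ suffices. The plan has essentially no serious obstacle: the only point to watch is that the $\eta_i$ themselves, not their squares, are the unit vectors in $\ell^4$, so both Cauchy--Schwarz steps must be carried out after squaring and working in $\ell^2$. The non-commutative structure is used only in the very first step, through the Schatten H\"older inequality.
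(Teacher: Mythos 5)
Your proof is correct, and after the common first step it takes a genuinely different route from the paper's. Both arguments begin identically: the Schatten--H\"older inequality applied pointwise in $t$ reduces the hypothesis to the scalar inequality $1-\delta \le \sum_t \eta_1(st)\eta_2(st)\eta_3(t)\eta_4(t)$ in $\ell^4(\Gamma)$. The paper then groups the factors as $(\eta_1\eta_2)$ against $\lambda_{s^{-1}}^{-1}$-translates of $(\eta_3\eta_4)$, reads off near-equality in Cauchy--Schwarz for \emph{that} pairing to get $\langle\eta_1^2,\eta_2^2\rangle\ge(1-\delta)^2$ and $\langle\eta_3^2,\eta_4^2\rangle\ge(1-\delta)^2$, upgrades these to $\|\eta_1-\eta_2\|_{\ell^4}=o(1)$ and $\|\eta_3-\eta_4\|_{\ell^4}=o(1)$ via the elementary inequality $\|a-b\|_{\ell^4}\le\sqrt{\|a^2-b^2\|_{\ell^2}}$, and only then substitutes back to reach $\langle\lambda_{s^{-1}}\eta_1^2,\eta_3^2\rangle=1+o(1)$. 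You instead regroup as $(ac)(bd)$ with $a=\eta_1$, $c=\lambda_s\eta_3$, which makes the target inner product $\langle\eta_1^2,\lambda_s\eta_3^2\rangle=\sum a^2c^2$ appear directly in the first Cauchy--Schwarz application; this bypasses the intermediate closeness statements and the $\ell^4$-versus-$\ell^2$ upgrade entirely. What your pairing buys is a shorter proof with an explicit dependence $\delta=\epsilon^2/4$, in place of the paper's qualitative $o(1)$ bookkeeping; what the paper's pairing buys is the auxiliary information $\eta_1\approx\eta_2$ and $\eta_3\approx\eta_4$, which is not needed for the lemma as stated (nor, as far as I can see, elsewhere in the proof of Theorem \ref{Thm-CounterExample}). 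All the individual steps check out: the substitution $t'=st$ correctly turns $\eta_3(t)$ into $(\lambda_s\eta_3)(t')$, the vectors $\eta_i^2$ and $(\lambda_s\eta_i)^2=\lambda_s(\eta_i^2)$ are unit vectors in $\ell^2(\Gamma)$ because the $\eta_i$ are unit vectors in $\ell^4(\Gamma)$, and the final expansion of $\|a^2-c^2\|_{\ell^2}^2$ is standard.
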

\begin{proof}
In the proof we write $o(1)$ for a quantity that goes to $0$
as $\delta$ goes to $0$.  By H\"older's inequality in $\cS_4(\cH)$,
\begin{eqnarray*} 
1-\delta\leq\Re\: \Tr\left(s^{-1}.(\xi_1 \xi_2) \xi_3
  \xi_4\right)&\leq& \langle \lambda_{s^{-1}}(\eta_1 \eta_2), \eta_3
  \eta_4\rangle\\ & \leq & \|\eta_1 \eta_2\|_{\ell^2\Gamma} \|\eta_3
  \eta_4\|_{\ell^2\Gamma} \leq 1.
\end{eqnarray*} 
In particular,
$\langle \eta_1^2,\eta_2^2\rangle = \|\eta_1 \eta_2\|_{\ell^2\Gamma}^2 \geq
(1-\delta)^2$, which clearly implies that $\|\eta_1^2 - \eta_2^2\|_{\ell^2\Gamma} =
o(1)$. The elementary inequality $\|a-b\|_{\ell^4\Gamma} \leq \sqrt{\|a^2 - b^2\|_{\ell^2\Gamma}}$ valid for all $a,b:\Gamma \to \R_+$ in turn implies $\|\eta_1 -
\eta_2\|_{\ell^4\Gamma} = o(1)$. Similarly, $\|\eta_3 - \eta_4\|_{\ell^4\Gamma}  =o(1)$.  By the
first line of the series of inequalities above, we get $\langle
\lambda_{s^{-1}} \eta_1^2, \eta_3^2 \rangle =1+o(1)$, which indeed
implies $\|\lambda_s \eta_3^2 - \eta_1^2\|_{\ell^2\Gamma} = o(1)$.
\end{proof}

\section{Preliminaries} \label{Sect-Preliminaries}

We collect the necessary results and fix the notation. In particular, we introduce $L^p$-Fourier multipliers for any locally compact group.

\subsection{General notation} 
For a Hilbert space $\cH$, $\cS_p(\cH)$ denotes the Schatten class, i.e. the non-commutative $L^p$-space associated with $B(\cH)$. We use brackets $[\: ]$ to denote the closure of an operator and $\cdot$ for the strong product of (unbounded) operators. Recall that $a \cdot b$, when it exists, is the closure of the operator defined on $\{x \in \dom(b), b(x) \in \dom(a)\}$ by $x \mapsto a(b(x))$. For $\varphi$ a normal, semi-finite, faithful weight on a von Neumann algebra $\cM$, we set $\nphi = \{ x \in \cM \mid \varphi(x^\ast x ) < \infty \}$ and $\mphi = \nphi^\ast \nphi$. 

\subsection{Integral operators}
Let $(X,d x)$ be a measure space. A (bounded) integral operator $A$ on $L^2(X,d x)$ is a bounded operator for which there is a measurable function $(s,t) \mapsto A_{s,t}$ (called the kernel of $A$) on $X \times X$ such that, for all $\xi \in L^2(X,d x)$, 
\begin{itemize}
\item $t \mapsto A_{s,t} \xi(t) \in L^1(X,d x)$ for almost every $s \in X$.
\item $A \xi (s) = \int_X A_{s,t} \xi(t)$ for almost every $s \in X$.
\end{itemize}
By Fubini the kernel is only defined up to an almost everywhere zero function.
The set of integral operators is a self-adjoint subspace of $B(L^2(X))$, and the kernel of the adjoint is given by $(A^*)_{s,t} = \overline{A_{t,s}}$.
The Schatten $\cS_2$-class consists of all integral operators with kernel belonging to $L^2(X \times X)$, and for such operator, 
\[ \|A\|_{\cS_2(L^2X)} = \left(\iint |A_{s,t}|^2 ds dt\right)^{1/2}.\]
More generally if $A,B \in \cS^2(L^2X)$,
\begin{equation}\label{eq=formula_for_trace} \Tr(AB) =\iint A_{s,t} B_{t,s} dt ds.\end{equation}

\subsection{Schur multipliers}\label{subsection=Schur}

Let $(X, \mu)$ be a measure space and let $\psi: X \times X \rightarrow \mathbb{C}$ be an essentially bounded measurable function. 
We identify $\cS_2(L^2 X )$ linearly with $L^2(X \times X)$ by the previous paragraph. Then, we obtain a bounded map
\[
 L^2(X \times X) \rightarrow  L^2(X \times X): (a_{x,y})_{x,y \in X} \mapsto (\psi(x,y) a_{x,y})_{x,y \in X},
\] 
which canonically determines a bounded map on $\cS_2(L^2 X)$. If it maps $\cS_2(L^2 X)\cap \cS_p(L^2 X)$ to  $\cS_p(L^2 X)$ and it extends boundedly to $\cS_p(L^2X)$, it will be called a (Schur) multiplier of $\cS_p(L^2X)$.

We will use the following result, which is a minor modification of \cite[Theorem 1.19]{LafSal}. 
\begin{thm}\label{thm=mult_de_symb_continu}
  Let $\mu$ be a Radon measure on a locally compact space
  $X$, and $\psi:X \times X \to \C$ a continuous function. Let $1 \leq p
  \leq \infty$ and $K>0$. The following are equivalent:
\begin{enumerate}[(i)] 
\item \label{asser=norm_locCompact} $\psi$ defines a bounded multiplier
  on $\cS_p(L^2(X,\mu))$ with norm less than $K$.
\item \label{asser=norm_parties_semifinies} For every  $\sigma$-finite measurable subset
  $X_0$ in $X$,  $\psi$ restricts to a bounded  multiplier   on $\cS_p(L^2( X_0, \mu))$ with norm
  less than $K$.
\item \label{asser=norm_parties_finies} For any finite subset
  $F=\{x_1,\dots,x_N\}$ in $X$ belonging to the support of $\mu$, the
  multiplier $(\psi(x_i,x_j))$ is bounded on $\cS_p(\ell^2 F)$ with norm
  less than $K$.
\end{enumerate}
\end{thm}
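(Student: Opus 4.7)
The plan is to prove the cyclic implications $(\ref{asser=norm_locCompact}) \Rightarrow (\ref{asser=norm_parties_semifinies}) \Rightarrow (\ref{asser=norm_parties_finies}) \Rightarrow (\ref{asser=norm_locCompact})$. The implication $(\ref{asser=norm_locCompact}) \Rightarrow (\ref{asser=norm_parties_semifinies})$ is immediate: for any measurable $X_0 \subseteq X$, extension by zero realizes $\cS_p(L^2(X_0,\mu))$ as a closed $M_\psi$-invariant subspace of $\cS_p(L^2(X,\mu))$ in an isometric way, so the restricted multiplier has norm at most $K$.

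For $(\ref{asser=norm_parties_semifinies}) \Rightarrow (\ref{asser=norm_parties_finies})$, fix $F = \{x_1,\dots,x_N\} \subseteq \mathrm{supp}\,\mu$ and, using that $\mu$ is Radon, choose pairwise disjoint relatively compact neighborhoods $V_i \ni x_i$ with $0 < \mu(V_i) < \infty$. Set $X_0 := \bigcup_i V_i$, which is $\sigma$-finite. The normalized indicators $\xi_i := \chi_{V_i}/\sqrt{\mu(V_i)}$ form an orthonormal system in $L^2(X_0)$, so the map $\iota: \cS_p(\ell^2 F) \to \cS_p(L^2 X_0)$, $e_{ij} \mapsto \xi_i \otimes \xi_j^*$, is an isometric embedding, with a contractive left inverse $P$ given by compression onto the span of the $\xi_i$. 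A direct calculation, exploiting the pairwise disjoint supports of the $\xi_i$, shows that $P \circ M_\psi^{X_0} \circ \iota$ is the matrix Schur multiplier on $\cS_p(\ell^2 F)$ with symbol
\[
\bar\psi_{ij} := \frac{1}{\mu(V_i)\mu(V_j)} \iint_{V_i \times V_j} \psi(s,t) \, ds\, dt.
\]
By $(\ref{asser=norm_parties_semifinies})$ its norm is at most $K$. Shrinking the $V_i$ onto $\{x_i\}$, continuity of $\psi$ gives $\bar\psi_{ij} \to \psi(x_i, x_j)$; since the Schur multiplier norm on the finite-dimensional $\cS_p(\ell^2 F)$ depends continuously on the symbol, the limiting symbol $(\psi(x_i, x_j))$ also has multiplier norm at most $K$.

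The main obstacle is $(\ref{asser=norm_parties_finies}) \Rightarrow (\ref{asser=norm_locCompact})$, a discretization argument. It suffices to bound $\|M_\psi A\|_{\cS_p} \leq K \|A\|_{\cS_p}$ on a dense (or, for $p = \infty$, weak-$*$ dense) subclass of $\cS_p(L^2 X)$; we take $A$ to be finite-rank with regular kernel compactly supported in some $C \subseteq X \times X$. For each $n$, partition a compact neighborhood of the projections of $C$ into measurable cells $(V_i^n)_{i=1}^{N_n}$ of mesh tending to $0$, with chosen points $x_i^n \in V_i^n \cap \mathrm{supp}\,\mu$, and let $\mathbb{E}_n := \iota_n \circ P_n$ be the contractive block-constant conditional expectation on $\cS_p(L^2 X_0^n)$ built from the $V_i^n$. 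The previous calculation applied to this partition gives $\mathbb{E}_n \circ M_\psi \circ \mathbb{E}_n = \iota_n \circ (P_n M_\psi \iota_n) \circ P_n$, where the middle factor is the matrix Schur multiplier with symbol the averages $(\bar\psi_{ij}^n)$, of norm at most $K$ by $(\ref{asser=norm_parties_finies})$ and the continuity step above. Therefore $\|\mathbb{E}_n M_\psi \mathbb{E}_n A\|_{\cS_p} \leq K \|A\|_{\cS_p}$, uniformly in $n$.

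Finally, as $n \to \infty$ the refining martingale gives $\mathbb{E}_n A \to A$ in $\cS_2$, and the $\cS_2$-boundedness of $M_\psi$ (of norm $\|\psi\|_\infty$) combined with a further $\mathbb{E}_n$-convergence yields $\mathbb{E}_n M_\psi \mathbb{E}_n A \to M_\psi A$ in $\cS_2$. A weak (for $1 < p < \infty$) or weak-$*$ (for $p = 1$ and $p = \infty$) compactness argument in $\cS_p$ then extracts a cluster point of the uniformly $\cS_p$-bounded sequence that must agree with $M_\psi A$ by the $\cS_2$ convergence; this yields $\|M_\psi A\|_{\cS_p} \leq K \|A\|_{\cS_p}$, and density completes the proof.
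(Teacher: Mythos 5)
Your overall strategy is sound and, unlike the paper --- which simply quotes \cite[Theorem 1.19]{LafSal} for the equivalence of (\ref{asser=norm_locCompact}) and (\ref{asser=norm_parties_finies}) in the $\sigma$-finite case and only supplies the short reduction from general $X$ to a $\sigma$-finite $X_0$ via the support projections of an element of $\cS_p$ --- you re-prove that cited result from scratch. Your implications (\ref{asser=norm_locCompact})$\Rightarrow$(\ref{asser=norm_parties_semifinies}) and (\ref{asser=norm_parties_semifinies})$\Rightarrow$(\ref{asser=norm_parties_finies}) are correct. However, there is a genuine gap in (\ref{asser=norm_parties_finies})$\Rightarrow$(\ref{asser=norm_locCompact}), at the point where you assert that the discretized symbol $(\bar\psi^n_{ij})$ has multiplier norm at most $K$ ``by (\ref{asser=norm_parties_finies}) and the continuity step above''. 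The continuity of the Schur multiplier norm in the symbol is a \emph{fixed-dimension} statement: if two $N\times N$ symbols differ by at most $\epsilon$ entrywise, their multiplier norms on $\cS_p(\ell^2_N)$ can differ by as much as $\epsilon\sqrt{N}$ (random sign matrices achieve this for $p=\infty$). In your discretization the dimension $N_n$ tends to infinity as the mesh tends to $0$, while the entrywise error is only the $o(1)$ given by the modulus of continuity of $\psi$; there is no reason for the product to vanish, so the uniform bound $\|\mathbb{E}_n M_\psi \mathbb{E}_n\|\le K$ does not follow as written --- and this is exactly the step the whole implication hinges on.

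The standard repair (and essentially the argument of \cite{LafSal}) is convexity rather than continuity: writing $\nu_i$ for the normalized restriction of $\mu$ to $V_i^n$, one has for $i\ne j$
\[
\bar\psi^n_{ij}=\int \psi(s_i,s_j)\, d(\nu_1\otimes\cdots\otimes\nu_{N_n})(s_1,\dots,s_{N_n}),
\]
and almost every tuple with respect to this product measure consists of distinct points of the support of $\mu$, so by (\ref{asser=norm_parties_finies}) and convexity of the multiplier norm the off-diagonal part of $(\bar\psi^n_{ij})$ is an average of symbols of norm at most $K$. The diagonal must be treated separately, since the averaging produces $\int\psi(s,s)\,d\nu_i(s)$ rather than $\bar\psi^n_{ii}$; but a Schur multiplier supported on the main diagonal has norm at most the supremum of its entries on every $\cS_p$, so correcting the diagonal costs only the $o(1)$ oscillation of $\psi$ on a single cell. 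With this replacement your martingale and weak-compactness limiting argument goes through (it is fine as stated, including the endpoint cases via $\sigma(\cS_1,K(\cH))$- and $\sigma(B(\cH),\cS_1)$-compactness).
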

\begin{proof}
The equivalence of (\ref{asser=norm_locCompact}) and (\ref{asser=norm_parties_finies}) of  Theorem \ref{thm=mult_de_symb_continu} was proved in \cite[Theorem 1.19]{LafSal} under the additional assumption that $\mu$ is $\sigma$-finite. In the current theorem, this implies the equivalence of (\ref{asser=norm_parties_semifinies}) and (\ref{asser=norm_parties_finies}). The implication (\ref{asser=norm_locCompact}) implies (\ref{asser=norm_parties_semifinies}) is trivial. Assume (\ref{asser=norm_parties_semifinies}). Let $x \in \cS_p(L^2(X,\mu))$. The supports of $x$ and $x^\ast$ are $\sigma$-finite projections in $B(L^2X)$. So, let $X_0 \subset X$ be $\sigma$-finite such that the support projections of $x$ and $x^\ast$ project onto spaces contained in $L^2(X_0, \mu)$. Then, the multiplier $\psi$ applied to $x$ is equal to the restriction of $\psi$ to $X_0 \times X_0$ applied to $x$. Hence, (\ref{asser=norm_locCompact}) follows.  
 \end{proof}
 
Let $G$ be a locally compact group. Let $\phi \in \MCB(G)$ and set $\check{\phi} \in L^\infty(G \times G)$ by $\check{\phi}(s,t) = \phi(s t^{-1})$. Then, $\check{\phi}$ is a Schur multiplier acting on $\cS_p(L^2G)$, see \cite{LafSal} for details. We will denote this map by $M_\phi^p$.

\subsection{The von Neumann algebra of $G$.}\label{subsect=groupvna}
Let $G$ be a locally compact group equipped with a left Haar
measure, and denote by $\Delta:G\to (0,\infty)$ the modular
function. Recall (see \cite[p. 65]{TakII}) that $\Delta$ is the group morphism satisfying the following equations for all compactly supported continuous function $f:G \to \C$~:
\begin{equation}\label{eq=modular1} \int f(ts) dt = \Delta(s)^{-1} \int f(t) dt,\end{equation}
\begin{equation}\label{eq=modular2} \int f(t^{-1}) dt = \int f(t) \Delta(t)^{-1} dt.\end{equation}

 Denote by $\lambda$ and $\rho$ the left and right regular
representations of $G$~: for $s \in G$, $\lambda_s$ and $\rho_s$ are
unitaries on $L^2(G)$ given by $\lambda_s \xi(t) = \xi(s^{-1}t)$ and
$\rho_s \xi(t) = \sqrt{\Delta(s)} \xi(ts)$. The (left) von Neumann
algebra of $G$ is defined by $\cL G = \lambda(G)''$. Its commutant is
$\rho(G)''$, the right von Neumann algebra of $G$. If $f \in L^1(G)$, the formulas $\langle \lambda(f)\xi,\eta\rangle = \int f(s) \langle \lambda_s \xi,\eta\rangle ds$ and $\langle \rho(f)\xi,\eta\rangle = \int f(s) \langle \rho_s \xi,\eta\rangle ds$ define operators $\lambda(f) \in \cL G$ and $\rho(f) \in \rho(G)''$ that are integral operators on $L^2(G)$ with kernel
\begin{equation}\label{eq=kernel_lambdaf} (\lambda(f))_{s,t} = \Delta(t^{-1}) f(st^{-1})\ ,\ (\rho(f))_{s,t} = \sqrt{\Delta(s^{-1}t)} f(s^{-1}t).\end{equation}
As a consequence, $\lambda(f)^\ast= \lambda(f^\ast)$ and $\rho(f)^\ast=\rho(f^\ast)$ where $f^\ast \in L^1(G)$ is given by
\begin{equation}\label{eq=def_of_xistar}f^\ast(s) = \overline{f(s^{-1})} \Delta(s^{-1}).\end{equation} 

When $G$ is discrete, $\cL G$ is finite and carries a natural
trace. More generally when $G$ is unimodular $\cL G$ carries a natural
semifinite trace. In general, although $\cL G$ might be semifinite, it
is not equipped with a natural trace, but rather with its natural
weight, given by $\varphi (x^*x) = \|f\|_{L^2G}^2$ if there is $f \in
L^2G$ such that $x \xi = f \ast \xi$ for all $\xi \in L^2G$ and $\varphi(x^*x)=\infty$ otherwise.
(it is an easy exercice to check that $\varphi$ is a trace if and only
if $G$ is unimodular). It is convenient to work with this weight.  A
natural weight on the commutant $\rho(G)''$ of $\cL G$ in $B(L^2 G)$
is given by $\psi(x^*x) = \|f\|_{L^2G}^2$ if $x = \rho(f)$ with $f
\in L^2G$ and $\psi(x^*x)=\infty$ otherwise.

\subsection{The non-commutative $L^p$-space of $\cL G$.} \label{subsect=Connes-Hilsum}
We recall the Connes-Hilsum construction \cite{Con}, \cite{Hilsum} of $L^p(\cL G)$ in the particular case when $\cL G$ is the von Neumann algebra of $G$. The space $D(L^2 G,\psi)$ of $\psi$-bounded elements of $L^2G$ is the set of functions $\xi \in L^2G$ such that $f \mapsto \xi \ast f$ is bounded on $L^2G$. We will also denote this operator by $\lambda(\xi)$. Given a weight $\omega$ on $\cL G$, the spatial derivative $d\omega/d\psi$ is the unique positive self-adjoint operator such that $\omega(\lambda(\xi) \lambda(\xi)^\ast) = \| (d\omega/d\psi)^{1/2} \xi\|_{L^2G}^2$ for all $\psi$-bounded $\xi$. For example, \eqref{eq=def_of_xistar} gives that $d\varphi/d\psi = \Delta$. The spatial derivative of an element $\omega$ of the predual is defined by $d \omega/d\psi = u d |\omega|/d\psi$ where $\omega = u |\omega|$ is the polar decomposition of $\omega$. The set $L^1(\cL G)$ defined as $\{ d\omega/d\psi,\omega \in \cL G_*\}$ is then a linear space (the sum being the closure of the sum), and we denote $\int (d\omega/d\psi) d\psi = \omega(1)$. For an arbitrary $1 \leq p<\infty$, $L^p(\cL G)$ is defined as the set of closed densily defined operators $T$ with polar decomposition $T=u|T|$ satisfying $u \in \cL G$, $|T|^p \in L^1(\cL G)$, and one denotes $\|T\|_{L^p(\cL G)} = (\int |T|^p d\psi)^{1/p}$. In general the non-zero elements of $L^p(\cL G)$ are not bounded operators. Hilsum \cite{Hilsum} proved that the sum of two elements of $L^p(\cL G)$ is densely defined, closable, that its closure belongs to $L^p(\cL G)$, and that for this linear structure if $p\geq 1$, $L^p(\cL G)$ is a Banach space for the norm $\| \cdot \|_{L^p(\cL G)}$. Moreover, for $0\leq p,q,r \leq \infty$ with $1/r=1/p+1/q$ and $a \in L^p(\cL G)$ and $b \in L^q(\cL G)$, $ab$ is closable and its closure (still denoted by $ab$) belongs to $L^r(\cL G)$   and this product is associative. Lastly, if $r=1$, $\int a b d\psi=\int b a d\psi$, and if $p\neq \infty$ the pairing $\langle a,b\rangle=\int a b d\psi$ realizes $L^q(\cL G)$ isometrically as the dual of $L^p(\cL G)$.

Apart from what we just recalled, we will use some facts from \cite[Proposition 11]{Hilsum} that we collect in the following proposition.

\begin{prop}\label{prop=reminders_hilsum} We have the following properties.
\begin{enumerate}
\item We have $u D(L^2G,\psi)\subset D(L^2G,\psi)$ for every $u \in \cL G$.
\item If $x \in L^p(\cL G)$ with $2 \leq p \leq \infty$, then $D(L^2G,\psi) \subset \dom(x)$.
\item Let $1 < p \leq \infty$. For any $\xi,\eta \in D(L^2G,\psi)$ there is a bounded linear map $\omega_{\xi,\eta}^p: L^p(\cL G) \to \C$ satisfying $\omega_{\xi,\eta}^p(x) = \langle |x|^{1/2} \xi,|x|^{1/2} u^* \eta \rangle$ if $x= u |x|$ is the polar decomposition of $x \in L^p(\cL G)$.
\end{enumerate}
\end{prop}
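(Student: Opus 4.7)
These three assertions are part of the standard Connes-Hilsum theory of spatial $L^p$-spaces \cite{Hilsum}; the plan is to indicate the key ideas. For part (1), I would establish the operator identity $\lambda(u\xi) = u \cdot \lambda(\xi)$ for $u \in \cL G$ and $\xi \in D(L^2G,\psi)$, from which the boundedness of $\lambda(u\xi)$ is immediate. To verify this identity, apply both sides to vectors of the form $\rho(f)\eta$ with $f \in C_c(G)$ and $\eta \in L^2G$, using that $u$ commutes with $\rho(G)''$ together with the strong density of $\{\rho(f) : f \in C_c(G)\}$ in $\rho(G)''$.

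For part (2), write the polar decomposition $x = u|x|$ and express $|x|^p = d\omega/d\psi$ for some $\omega \in (\cL G_*)^+$. The defining relation of the spatial derivative yields
\[\bigl\||x|^{p/2}\xi\bigr\|_{L^2G}^2 = \omega\bigl(\lambda(\xi)\lambda(\xi)^*\bigr) < \infty\]
for every $\xi \in D(L^2G,\psi)$, so $D(L^2G,\psi) \subset \dom(|x|^{p/2})$. The hypothesis $p \geq 2$ is then used in the form of the standard inclusion $\dom(|x|^{p/2}) \subset \dom(|x|) = \dom(x)$, a direct consequence of functional calculus for positive self-adjoint operators (via, e.g., the pointwise estimate $t \leq 1 + t^{p/2}$ on $[0,\infty)$).

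For part (3), I first observe that $|x|^{1/2} \in L^{2p}(\cL G)$, since $(|x|^{1/2})^{2p} = |x|^p \in L^1(\cL G)$. Applying (2) to $|x|^{1/2}$ (which is legitimate because $2p > 2$ when $p > 1$) and (1) to ensure $u^*\eta \in D(L^2G,\psi)$, I conclude that both $\xi$ and $u^*\eta$ belong to $\dom(|x|^{1/2})$, so the right-hand side defines a complex number. The strategy for linearity and boundedness is approximation: let $e_n = 1_{[0,n]}(|x|)$ and $x_n = x e_n$. Then $x_n \in \cL G \cap L^p(\cL G)$ with $|x_n|^{1/2} = |x|^{1/2}e_n$, so the formula reduces to $\omega_{\xi,\eta}^p(x_n) = \langle x_n\xi,\eta\rangle$, which is manifestly linear in $x_n$. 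Strong convergence of $e_n$ to the identity yields $\omega_{\xi,\eta}^p(x_n) \to \omega_{\xi,\eta}^p(x)$, and a uniform bound $|\omega_{\xi,\eta}^p(x_n)| \leq C(\xi,\eta)\|x_n\|_{L^p(\cL G)}$ would be extracted by realizing $\langle x_n\xi,\eta\rangle$ as the evaluation at $x_n$ of the normal functional on $\cL G$ associated with $\xi,\eta$ and then applying the noncommutative H\"older inequality after locating its spatial derivative in the dual $L^q$-space with $1/p+1/q=1$.

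The main obstacle I expect is in part (3). The linear structure on $L^p(\cL G)$ is given by closures of sums of unbounded operators, so the linearity of $\omega_{\xi,\eta}^p$ in $x$ cannot be read off directly from the polar-decomposition formula and must be deduced from the bounded approximation. Moreover, producing the uniform $L^p$-estimate requires a careful positioning of $d\omega_{\xi,\eta}/d\psi$ in the scale of spatial $L^q$-spaces; the restriction $p > 1$ is essential here, since for $p = 1$ the analogous functional need not be bounded in general.
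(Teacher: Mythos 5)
First, a remark on the comparison itself: the paper does not prove this proposition at all --- it is explicitly presented as a collection of facts quoted from \cite[Proposition 11]{Hilsum}, so your attempt has to be measured against Hilsum's arguments rather than against anything in the text. Your parts (1) and (2) are correct and are essentially the standard arguments: $\lambda(u\xi)=u\cdot\lambda(\xi)$ because $u\in\cL G$ commutes with $\rho(G)''$, and the domain statement follows from the defining identity of the spatial derivative together with $\dom(|x|^{p/2})\subset\dom(|x|)=\dom(x)$ for $p\geq 2$.

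The gap is in part (3), exactly where you yourself place the difficulty. Your proposed source for the uniform bound --- realizing $\langle x_n\xi,\eta\rangle$ as the evaluation of the normal functional $\omega_{\xi,\eta}$ on $\cL G$ and then ``locating its spatial derivative in the dual $L^q$-space'' --- cannot work when $G$ is not unimodular: $d\omega_{\xi,\eta}/d\psi$ is by construction an element of $L^1(\cL G)$, and, as the paper stresses repeatedly, $L^1(\cL G)$ and $L^q(\cL G)$ intersect trivially for $q>1$ in the non-unimodular case, so there is no element of $L^q(\cL G)$ against which to apply H\"older. Since all of part (3) rests on this estimate (including linearity: $x_n+y_n$ is not the truncation of the closed sum $x+y$, so you need norm-continuity to pass to the limit), the argument is incomplete at its crucial point. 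A bound that does work comes directly from the spectral theorem: if $\mu_\xi$ denotes the spectral measure of $|x|^p$ at $\xi$, then H\"older applied to $\int t^{1/p}\,d\mu_\xi(t)$ gives
\[
\||x|^{1/2}\xi\|^2 \;=\; \int t^{1/p}\,d\mu_\xi(t)\;\leq\;\Big(\int t\,d\mu_\xi(t)\Big)^{1/p}\|\xi\|^{2/q}\;\leq\;\|x\|_{L^p(\cL G)}\,\|\lambda(\xi)\|^{2/p}\,\|\xi\|^{2/q},
\]
and similarly for $u^*\eta$ (using $\|\lambda(u^*\eta)\|\leq\|\lambda(\eta)\|$); Cauchy--Schwarz then yields $|\omega^p_{\xi,\eta}(x)|\leq \|x\|_{L^p(\cL G)}(\|\lambda(\xi)\|\|\lambda(\eta)\|)^{1/p}(\|\xi\|\|\eta\|)^{1/q}$ with no truncation needed. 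Incidentally, this computation also works at $p=1$ (where the functional is just $\omega\mapsto$ a fixed normal-functional evaluation), so your closing claim that boundedness ``need not hold in general'' for $p=1$ is not the reason for the restriction $p>1$ in the statement.
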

As a direct consequence, using the inclusion $L^2 F   \subset D(L^2G,\psi)$ and the closed graph theorem we get the following.
\begin{prop}\label{prop=def_of_PFxPF}
Let $F \subset G$ be a relatively compact Borel subset with positive measure, and $P_F:L^2G  \to L^2 F$ the orthogonal projection.
\begin{enumerate}
\item If $p \geq 2$, $L^2F \subset \dom(x)$ for every $x \in L^p(\cL G)$, and $x \in L^p(\cL G) \to x P_F \in B(L^2 G)$ is a linear bounded map. 
\item If $1 \leq p \leq \infty$ there is a bounded linear map $L^p(\cL G) \to B(L^2 F)$, that maps an element $x =u |x| \in L^p(\cL G)$ to $ (|x|^{1/2}u^*P_F)^* |x|^{1/2} P_F$. We abusively denote this map by $x \mapsto P_F x P_F$.
\end{enumerate}
\end{prop}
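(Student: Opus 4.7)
The plan is to combine the two parts of Proposition \ref{prop=reminders_hilsum} with an explicit control of $\lambda(\xi)$ for $\xi \in L^2F$. First I would establish $L^2F \subset D(L^2G,\psi)$: extending $\xi \in L^2F$ by zero puts its support in the compact closure $\overline F$, and a Cauchy--Schwarz estimate on $\xi \ast f(t) = \int \xi(s) f(s^{-1}t)\,ds$ combined with the left-invariance of the Haar measure gives the explicit bound $\|\lambda(\xi)\| \leq |F|^{1/2}\|\xi\|_2$. Plugging this inclusion into Proposition \ref{prop=reminders_hilsum}(2) yields $L^2F \subset \dom(x)$ for every $x \in L^p(\cL G)$ with $p \geq 2$, which is the domain part of (1).

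For the boundedness assertion in (1), note that $xP_F$ is everywhere defined on $L^2G$ because $P_F$ has range in $L^2F \subset \dom(x)$, and it is closed since $x$ is closed and $P_F$ is bounded; the closed graph theorem then gives $xP_F \in B(L^2G)$ for each individual $x$. To upgrade this to boundedness of the linear map $x \mapsto xP_F$ from $L^p(\cL G)$ into $B(L^2G)$, I would apply the closed graph theorem a second time: given $x_n \to x$ in $L^p(\cL G)$ and $x_n P_F \to T$ in $B(L^2G)$, one checks via Proposition \ref{prop=reminders_hilsum}(1)(2)(3) and $p \geq 2$ that $\omega^p_{P_F\xi,\eta}(y) = \langle yP_F\xi, \eta\rangle$ for all $\xi \in L^2F$, $\eta \in D(L^2G,\psi)$ and $y \in L^p(\cL G)$ (manipulating $|y|^{1/2}$ and $u_y^\ast$ is legitimate because both vectors lie in $D(L^2G,\psi) \subset \dom(|y|^{1/2})$). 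Continuity of $\omega^p_{P_F\xi,\eta}$ on $L^p$ and density of $D(L^2G,\psi)$ in $L^2G$ then force $T = xP_F$.

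For (2) and any $1 \leq p \leq \infty$, write $x = u|x|$ and observe that $|x|^{1/2}$ and $u|x|^{1/2}$ both lie in $L^{2p}(\cL G)$ with norm at most $\|x\|^{1/2}_{L^p}$. Since $2p \geq 2$, part (1) gives $S := |x|^{1/2}P_F$ and $R := (u|x|^{1/2})^\ast P_F = |x|^{1/2}u^\ast P_F$ in $B(L^2G)$, each with norm controlled by $\|x\|^{1/2}_{L^p}$. Setting $P_F x P_F := R^\ast S$, the fact that $R$ factors through $P_F$ on the right ensures $R^\ast S$ vanishes on $(L^2F)^\perp$ and has range in $L^2F$, so it lives in $B(L^2F)$ with norm at most $C^2 \|x\|_{L^p}$ for a constant $C$ depending only on $F$.

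The main conceptual subtlety is linearity of $x \mapsto P_F x P_F$, since the defining formula involves the non-linear polar decomposition. I would resolve this via a weak identification: for $\xi,\eta \in L^2F$, using $P_F\xi=\xi$, $P_F\eta=\eta$ and Proposition \ref{prop=reminders_hilsum}(3),
\[
\langle P_F x P_F \xi,\eta\rangle = \langle |x|^{1/2}\xi,|x|^{1/2}u^\ast\eta\rangle = \omega^p_{\xi,\eta}(x).
\]
Since $x \mapsto \omega^p_{\xi,\eta}(x)$ is linear on $L^p(\cL G)$ for each fixed $\xi,\eta$, the sesquilinear form representing $P_F x P_F$ depends linearly on $x$, hence so does the operator itself. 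Apart from this weak identification, the proof is a routine double application of the closed graph theorem.
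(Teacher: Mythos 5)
Your proposal is correct and is essentially the paper's own (unwritten) argument: the paper derives the proposition in one line from the inclusion $L^2F \subset D(L^2G,\psi)$, Proposition \ref{prop=reminders_hilsum} and the closed graph theorem, and you have simply filled in exactly those details, including the explicit bound $\|\lambda(\xi)\|\leq |F|^{1/2}\|\xi\|_2$ and the weak identification via $\omega^p_{\xi,\eta}$. The only caveat is that your linearity argument in part (2) invokes $\omega^p_{\xi,\eta}$ for $p=1$, which Proposition \ref{prop=reminders_hilsum}(3) as stated excludes; for $p=1$ one should instead get linearity directly from the linear isometry $\cL G_*\to L^1(\cL G)$, $\omega\mapsto d\omega/d\psi$, as in Lemma \ref{lemma=isom_AG_L1}.
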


 \subsection{Completely bounded maps between von Neumann algebras} Let $\cM \subset B(\cH)$ a von Neumann algebra, $\psi$ a weight on $\cM'$ and $L^p(\cM,\psi)$ the spatial non-commutative $L^p$-space. The commutant of $M_n \otimes \cM$ in $M_n \otimes B(\cH)$ is $1 \otimes \cM'$, and is therefore equipped with the weight $1 \otimes \psi$. This naturally identifies $\cS_p^n \otimes L^p(\cM,\psi)$ with $L^p(M_n \otimes \cM, 1 \otimes \psi)$. We use this identification to define a Banach space structure on $\cS_p^n \otimes L^p(\cM,\psi)$.

If $\cM\subset B(\cH),\cN \subset B(\cH')$ are von Neumann algebras and $\psi,\psi'$ are weights on $\cM'$, $\cN'$, a bounded map $u:L^p(\cM,\psi) \to L^p(\cN,\psi')$ between the non-commutative $L^p$-spaces (Connes-Hilsum construction) is called completely bounded if $\|u\|_{cb}:=\sup_n \|u_n\|<\infty$. Here $u_n = id \otimes u : \cS_p^n \otimes L^p(\cM,\psi)\to \cS_p^n \otimes L^p(\cN,\psi')$. When $\cM$ and $\cN$ are semifinte, one can check that this definition agrees with the natural operator space structure on non-commutative $L^p$-spaces given in \cite{MR1648908}. 

\subsection{Definition of $L^p$ Fourier multipliers}\label{subsection=Fourier}Recall that the Fourier algebra  $A(G) = \left\{ \varphi\colon s\mapsto \langle \lambda_s \xi,\eta\rangle, \xi,\eta \in L^2 G\right\}$ coincides with the predual of $\cL G$ through the pairing $\langle \varphi,\lambda(f) \rangle = \int \varphi(s) f(s) ds$ for every $f \in L^1(G)$. We record here the explicit isomorphism between the Fourier algebra $A(G)$ and $L^1(\cL G)$. 
\begin{lemma}\label{lemma=isom_AG_L1} An element $\varphi \in A(G)$ corresponds (isometrically) to $x_\varphi \in L^1(\cL G)$ that satisfies $P_F x_\varphi P_F = (\varphi(ts^{-1}))_{s,t \in F}$.
\end{lemma}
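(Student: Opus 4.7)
The plan is to establish the two assertions of the lemma — the isometric correspondence $\varphi\mapsto x_\varphi$ and the kernel formula for $P_F x_\varphi P_F$ — separately. For the first, I identify $A(G)$ canonically with the predual $\cL G_*$ via the pairing $\langle\varphi,\lambda(f)\rangle = \int \varphi f$, then compose with the Connes--Hilsum linear isometric isomorphism $\cL G_*\cong L^1(\cL G)$, $\omega\mapsto d\omega/d\psi$. Setting $x_\varphi := d\omega/d\psi$, where $\omega\in\cL G_*$ corresponds to $\varphi$, gives the desired isometric map.

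For the kernel formula, the two maps $\varphi\mapsto P_F x_\varphi P_F$ and $\varphi\mapsto (\varphi(ts^{-1}))_{s,t\in F}$ are both linear and continuous from $A(G)$ to $B(L^2 F)$: the former by Proposition \ref{prop=def_of_PFxPF}(2), the latter by the bound $|F|\,\|\varphi\|_\infty\leq |F|\,\|\varphi\|_{A(G)}$ for the Hilbert--Schmidt norm of the integral operator with the displayed kernel. Since the positive-definite functions $\varphi(s)=\langle\lambda_s\zeta,\zeta\rangle$ (for $\zeta\in L^2G$) span $A(G)$ by polarization, it suffices to verify the formula for such $\varphi$, in which case $x_\varphi$ is positive.

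For such $\varphi$, I test both sides against $\xi,\eta\in L^2F\subset D(L^2G,\psi)$. By Propositions \ref{prop=reminders_hilsum}(3) and \ref{prop=def_of_PFxPF}(2), $\langle P_F x_\varphi P_F\xi,\eta\rangle = \omega^1_{\xi,\eta}(x_\varphi) = \langle x_\varphi^{1/2}\xi,x_\varphi^{1/2}\eta\rangle$. The defining equation of the spatial derivative gives $\|x_\varphi^{1/2}\xi\|^2 = \omega_{\zeta,\zeta}(\lambda(\xi)\lambda(\xi)^*) = \|\lambda(\xi)^*\zeta\|^2$, and polarizing (with care for the antilinearity of $\xi\mapsto\lambda(\xi)^*$) yields $\langle x_\varphi^{1/2}\xi,x_\varphi^{1/2}\eta\rangle = \langle \lambda(\eta)^*\zeta,\lambda(\xi)^*\zeta\rangle$. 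Using \eqref{eq=kernel_lambdaf} together with the substitution $u=ts^{-1}$ (so that $dt=\Delta(s)du$), one obtains $\lambda(\xi)^*\zeta(s) = \int\overline{\xi(u)}\zeta(us)\,du$, whence the inner product becomes a triple integral in $u,v,s$. On the other side, expanding $\varphi(ts^{-1}) = \int\zeta(st^{-1}w)\overline{\zeta(w)}\,dw$ and substituting $v=t^{-1}w$ (with $dw=dv$ by left Haar invariance) produces a triple integral that coincides with the first after relabelling.

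The main obstacle is the careful bookkeeping of complex conjugations and modular factors. The antilinearity of $\xi\mapsto\lambda(\xi)^*$ forces the order swap $\xi\leftrightarrow\eta$ in the polarization, and this is precisely what makes the formula yield the ``right-multiplication-like'' kernel $\varphi(ts^{-1})$ rather than $\varphi(st^{-1})$; meanwhile the modular function $\Delta$ must be tracked through the changes of variables both in deriving the formula for $\lambda(\xi)^*\zeta$ and in matching the two triple integrals.
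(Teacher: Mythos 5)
Your proof is correct and follows essentially the same route as the paper's: reduce by linearity to positive-definite $\varphi$, use the spatial-derivative characterization $\|x_\varphi^{1/2}\xi\|^2=\langle\varphi,\lambda(\xi)\lambda(\xi)^*\rangle$ for $\psi$-bounded $\xi$, and polarize. You merely spell out the triple-integral verification and the bookkeeping of the antilinearity of $\xi\mapsto\lambda(\xi)^*$ and of the modular factors, which the paper leaves implicit.
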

\begin{proof}
Start by considering the case when $\varphi$ is positive definite, so that the corresponding element of $\cL G_*$ is positive. By the definition of $L^1(\cL G)$ and the spatial derivative, $x_\varphi$ is the positive self-adjoint operator characterized by $\|x_\varphi^{1/2} \xi\|^2 =\langle \varphi, \lambda(\xi)\lambda(\xi)^*\rangle$ for all $\xi \in L^2G$ that are $\psi$-bounded. By definition of $P_F x_\varphi P_F$, and the fact that $L^2 F \subseteq L^1 G$, we get 
\[ \langle P_F x_\varphi P_F \xi,\xi\rangle = \iint_{F \times F} \varphi(t s^{-1}) \xi(t) \overline{\xi(s)} ds dt,\]
and by polarization 
\[ \langle P_F x_\varphi P_F \xi,\eta\rangle = \iint_{F \times F} \varphi(t s^{-1}) \xi(t) \overline{\eta(s)} ds dt.\]
This proves the lemma when $\varphi$ is positive definite. The general case follows by the linearity of $\varphi \mapsto P_F x_\varphi P_F$.
\end{proof}

\subsection{Fourier multipliers}
Given a function $\phi \in \MCB(G)$, we would like to define the Fourier multiplier on $L^p(\cL G)$ as the restriction to $L^p(\cL G)$ of $M_\phi$. This is possible when $G$ is unimodular because $L^p(\cL G) \cap B(L^2 G)$ is dense in $L^p(\cL G)$. When $G$ is not unimodular (and $p \neq \infty$) this is more problematic: $L^p(\cL G) \cap B(L^2 G) = \{0\}$, whereas $M_\phi$ is only defined on $B(L^2G)$. The definition is therefore done with the help of Proposition \ref{prop=def_of_PFxPF}.
\begin{defprop}\label{dfprop=def__Tphip} Let $\phi \in \MCB(G)$. There is a unique completely bounded linear map $T_\phi^p: L^p(\cL G) \to L^p(\cL G)$ that satisfies $P_F T_\phi^p(x) P_F = M_\phi (P_F x P_F)$. It has completely bounded norm less than $\|\phi\|_{\MCB(G)}$. This map is called the Fourier multiplier with symbol $\phi$.
\end{defprop}
\begin{proof}
Let us first prove that $T_\phi^p$ exists and is bounded. For $p=\infty$ this is obvious. 
Assume $p=1$. Consider $\widetilde \phi(s) = \phi(s^{-1})$. Then $\|\widetilde \phi\|_{\MCB(G)}=\|\phi\|_{\MCB(G)}$. Take $\varphi \in A(G) \to x_\varphi \in L^1(\cL G)$ the isometry described in Lemma \ref{lemma=isom_AG_L1}. Define $T_\phi^1$ by $T_\phi^1(x_\varphi)= x_{\widetilde \phi \varphi}$. It is a map of norm at most $\|\phi\|_{\MCB(G)}$, and by Lemma  \ref{lemma=isom_AG_L1} it satisfies $P_F T_\phi^1(x) P_F = M_\phi (P_F x P_F)$.

The general case follows by interpolation. We claim that the maps $T_\phi^1$ and $T_\phi^\infty$ are compatible with respect to the pair $(A_0,A_1)$ given by Theorem \ref{thm=interpolation_BB}. Indeed, let $a \in A_0 \cap A_1$. We have to show that $j_1^{-1}(T_\phi^1(j_1(a)) = j_\infty^{-1}(T_\phi^\infty(j_\infty(a))$. By \eqref{item=semi_injective} it suffices to have that $u_F(j_1^{-1}(T_\phi^1(j_1(a))) = u_F (j_\infty^{-1}(T_\phi^\infty(j_\infty(a)))$ for every relatively compact $F \subseteq G$. But by \eqref{item=diagram}, this equality is equivalent to $j_{1,F}^{-1}(M_\phi(j_{1,F}(u_F(a))) = j_{\infty,F}^{-1}(M_\phi(j_{\infty,F}(u_F(a)))$, which holds by \eqref{item=jpF}.

 By interpolation, there is therefore a map $T_\phi^p:L^p(\cL G) \to L^p(\cL G)$ of norm at most $\|\phi\|_{\MCB(G)}$. It satisfies $P_F T_\phi^p(x) P_F = M_\phi (P_F x P_F)$ by \eqref{item=diagram} and \eqref{item=jpF} again. 

The fact that $T_\phi^p$ is completely bounded follows from the argument in \cite[Theorem 1.6]{CanHaa} (see the proof of Theorem \ref{thm=embedding_in_ultraproduct} for details): apply the preceding to the function $\phi(s,k) = \phi(s)$ on $G \times SU(2)$ and use that $\cL SU(2) \simeq \oplus_{n \geq 1} M_n$.
\end{proof}

\section{Transference}\label{Sect=transference}

This section is devoted to the equality $\geq$ of Question \ref{Que-One}. The proof relies on transference techniques \cite[Lemma 2.4]{NeuRic}, which we adapt to non-discrete groups. 







\begin{lem}\label{lem=recover_phi_from_Iphi} Let $G$ be a locally compact group and let $1 < p < \infty$ be such that the conjugate exponent $q$ satisfies $p/q \in \Q$. Then there is a net $x_\alpha \in L^p(\cL G)$ and $y_\alpha \in L^q(\cL G)$ such that $\|x_\alpha\|_p = \|y_\alpha\|_{q} = 1$ and for all $\phi \in \MCB(G)$, 
\[ \lim_\alpha \langle T_\phi^p x_\alpha,y_\alpha^*\rangle = \phi(e).\]
\end{lem}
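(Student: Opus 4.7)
A completely bounded Fourier multiplier is a continuous function on $G$, so the conclusion reduces to producing unit-norm nets $x_\alpha \in L^p(\cL G)$, $y_\alpha \in L^q(\cL G)$ for which the linear functional $\phi \mapsto \langle T_\phi^p x_\alpha, y_\alpha^*\rangle$ on $\MCB(G)$ converges, in the topology of weak convergence against bounded continuous functions, to evaluation at $e$. Through the identification $L^1(\cL G) \simeq A(G)$ of Lemma \ref{lemma=isom_AG_L1} it is natural to arrange that this functional corresponds to evaluating $\phi \cdot \varphi_\alpha$ at $e$ for positive definite $\varphi_\alpha \in A(G)$ with $\varphi_\alpha(e)=1$ and support shrinking to $\{e\}$.

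To produce such a $\varphi_\alpha$, I fix a decreasing basis $(V_\alpha)$ of relatively compact symmetric open neighborhoods of $e$, choose positive unit vectors $\xi_\alpha \in L^2(V_\alpha)$, and set $\varphi_\alpha(s) = \langle \lambda_s \xi_\alpha, \xi_\alpha\rangle$. This is positive definite, continuous, supported in $V_\alpha V_\alpha^{-1}$ and satisfies $\varphi_\alpha(e)=1$; let $z_\alpha \in L^1(\cL G)$ be the corresponding positive element of unit $L^1$-norm. The endpoint computation in the construction of $T_\phi^1$ in Definition-Proposition \ref{dfprop=def__Tphip} already gives
\[ \int T_\phi^1(z_\alpha)\,d\psi = (\phi \varphi_\alpha)(e) = \phi(e). \]
The candidate pair is then $x_\alpha = z_\alpha^{1/p}$ and $y_\alpha = z_\alpha^{1/q}$, which are of unit norm in $L^p(\cL G)$ and $L^q(\cL G)$ respectively with $y_\alpha^* = y_\alpha$, and the crux is to verify that $\int T_\phi^p(z_\alpha^{1/p})\, z_\alpha^{1/q}\,d\psi \to \phi(e)$.

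I would attempt the verification by Stein-type complex interpolation, considering the analytic family $F(\zeta) = \int T_\phi^{1/\Re\zeta}(z_\alpha^\zeta)\, z_\alpha^{1-\zeta}\,d\psi$ on the strip $\Re\zeta \in [0,1]$; its boundary values at $\Re\zeta=0$ and $\Re\zeta=1$ both equal $\phi(e)$, so that if the family is analytic, bounded on the strip and satisfies the hypotheses of the three-lines lemma in the context of the interpolation machinery gathered in Theorem \ref{thm=interpolation_BB}, then $F(1/p) = \phi(e)$ (or at least converges to $\phi(e)$ as $V_\alpha$ shrinks).

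The main obstacle I expect is that Fourier multipliers do not commute with functional calculus, so the equality $T_\phi^p(z^{1/p})\,z^{1/q} = T_\phi^1(z)$ is not automatic, and the Connes-Hilsum framework for non-unimodular $G$ involves the modular function $\Delta$ in the fractional powers. This is presumably where the hypothesis $p/q \in \Q$ enters: writing $p = (m+n)/n$ and $q = (m+n)/m$ and amplifying by $M_{m+n}$, one can realize $z_\alpha^{1/p}$ and $z_\alpha^{1/q}$ as compressions of a single $L^1$-element against complementary matrix units in $\cL G \otimes M_{m+n}$. In this amplified setting the Fourier multiplier acts only on the $\cL G$ factor, and the cyclic property $\int ab\,d\psi = \int ba\,d\psi$ combined with the $L^1(\cL G \otimes M_{m+n}) \simeq A(G) \otimes \cS_1(\C^{m+n})$ identification converts the analysis into an integer-exponent tracial computation that should yield the desired limit.
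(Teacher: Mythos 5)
Your setup---positive $z_\alpha \in L^1(\cL G)$ corresponding to positive definite $\varphi_\alpha \in A(G)$ with $\varphi_\alpha(e)=1$ and support shrinking to $\{e\}$, candidates $x_\alpha = z_\alpha^{1/p}$, $y_\alpha = z_\alpha^{1/q}$---is essentially the right one, and the endpoint identity $\int T_\phi^1(z_\alpha)\,d\psi = \phi(e)$ is correct. The gap is in the verification step, and it is genuine. The Stein-type argument cannot work as stated: the values of $F(\zeta)=\int T_\phi(z_\alpha^{\zeta})\,z_\alpha^{1-\zeta}\,d\psi$ on the boundary lines are \emph{not} $\phi(e)$ away from the two real points $\zeta=0,1$, precisely because $T_\phi$ does not commute with the functional calculus $z \mapsto z^{it}$ (the very obstacle you flagged). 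Already for $G=\Z$ one gets $F(it)=\sum_g \phi(g)\,c_g(t)\,d_{-g}(t)$ where $c,d$ are the Fourier coefficients of $z^{it}$ and $z^{1-it}$; only the \emph{sum} $\sum_g c_g(t)d_{-g}(t)$ equals $1$, not the individual term at $g=0$, so $F(it)\neq\phi(e)$ in general. Thus the three-lines lemma gives nothing, and shrinking $V_\alpha$ does not help because the functional calculus is nonlocal in $A(G)$: there is no control on which group elements carry the Fourier mass of $z_\alpha^{it}$.

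What closes the argument---and this is exactly where $p/q\in\Q$ enters---is to avoid fractional functional calculus entirely. Write $1/p = k/r$, $1/q=k'/r$ with $k,k'\in\N$ and $r=k+k'$, take a positive $h_\alpha\in A_c(G)$ with support $S_\alpha\to\{e\}$, set $a_\alpha = j_{2r}(j_\infty^{-1}(\lambda(h_\alpha)))\in L^{2r}(\cL G)$ normalized to $\|a_\alpha\|_{2r}=1$, and put $x_\alpha=|a_\alpha|^{2k}$, $y_\alpha=|a_\alpha|^{2k'}$. These are now honest \emph{products} of elements of $L^{2r}$, so item (\ref{item=intersection_big2}) of Theorem \ref{thm=interpolation_BB}---multiplicativity of the Terp maps $j_p$, with support control surviving the modular twists $\sigma_{i/2p}$ since these only multiply symbols by powers of $\Delta$---identifies $x_\alpha$ with $j_p(j_\infty^{-1}(\lambda(f_\alpha)))$ and $y_\alpha^*$ with $g_\alpha\in A(G)$, where $f_\alpha,g_\alpha\ge 0$ and $\mathrm{supp}(f_\alpha)\subset (S_\alpha^{-1}S_\alpha)^k$. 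Compatibility of the $T_\phi^p$ under interpolation then yields $\langle T_\phi^p x_\alpha,y_\alpha^*\rangle = \int \phi\, f_\alpha g_\alpha$; taking $\phi=1$ gives $\int f_\alpha g_\alpha = 1$, and continuity of $\phi$ at $e$ finishes. Your closing paragraph (matrix amplification, ``integer-exponent tracial computation'') gestures toward this mechanism but does not carry it out; as written the proposal does not constitute a proof.
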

\begin{proof}
The proof relies on Theorem \ref{thm=interpolation_BB}. Since $p/q \in \Q$ there exists $r<\infty$ and $k,k' \in \N$ such that $\frac 1 p = \frac k r$ and $\frac 1{q} = \frac{k'}{r}$.

Take a net of $\R^+$-valued fuctions $h_\alpha\in A_c(G)$ such that the support $S_\alpha$ of $h_\alpha$ converge to $\{e\}$. This means that every neighbourhood of $e$ in $G$ contains $S_\alpha$ for all $n$ large enough. By \eqref{item=intersection_big} in Theorem \ref{thm=interpolation_BB}, $j_\infty^{-1}(\lambda(h_\alpha))$ belongs to $A_0 \cap A_1$, we can therefore define $a_\alpha =j_{2r}(j_\infty^{-1}(\lambda(h_\alpha))))\in L_{2r}(\cL G)$. Normalise $h_\alpha$ so that $\|a_\alpha\|_{2r}=1$. Consider the unit vectors $x_\alpha \in L_p(\cL G)$ and $y_\alpha \in L_q(\cL G)$ defined by $x_\alpha = |a_\alpha|^{2k}$ and $y_\alpha = |a_\alpha|^{2k'}$. By \eqref{item=intersection_big2} in Theorem \ref{thm=interpolation_BB}, we can write $x_\alpha = j_p( j_\infty^{-1}(\lambda(f_\alpha)))$ for a non-negative function $f_\alpha \in A_c(G)$ with support contained in $(S_\alpha^{-1}S_\alpha)^k$. Similarly $j_q^{-1}(y_\alpha)$ belongs to $A_0 \cap A_1$, and $(j_1(j_q^{-1}(y_\alpha)))^* \in L_1(\cL G)$ corresponds to  $g_\alpha \in A(G)$ satsifying $g_\alpha(s) \geq 0$ for every $s \in S$.

For every $\phi \in \MCB(G)$, since the maps $T_\phi^p$ are compatible with respect to the interpolation given by Theorem \ref{thm=interpolation_BB}, we can write by \eqref{eq=duality_compatibility},
\[ 
\langle T_\phi^p x_\alpha,y_\alpha^*\rangle_{L^p, L^q} = \langle T_\phi^\infty(\lambda(f_\alpha)),(j_1(j_q^{-1}(y_\alpha)))^*\rangle_{L^\infty, L^1} = \int_G \phi(s) f_\alpha(s) g_\alpha(s) ds.
\]
Taking $\phi=1$ we get $\int f_\alpha(s) g_\alpha(s) ds = \langle x_\alpha,y_\alpha\rangle = \|a_\alpha\|_{2r}^{2r} = 1$. Hence,
\begin{eqnarray*} 
|\langle T_\phi^p x_\alpha,y_\alpha^*\rangle - \phi(e)| &=& |\int (\phi(s)-\phi(e)) f_\alpha(s) g_\alpha(s) ds|\\ & \leq & \sup_{s \in \textrm{support}(f_\alpha)} |\phi(s) - \phi(e)|.
\end{eqnarray*}
But $\phi$ is continuous and the support of $f_\alpha$ converges to $\{e\}$. This proves the lemma.
\end{proof}

\begin{thm}\label{Thm-TransferenceEstimate}
Let $G$ be a locally compact group and let $1 \leq p \leq \infty$.  Let $\phi \in \MCB(G)$. Then,
\[
\Vert M_{\phi}^p \Vert_{\CB(\cS_p(L^2G))} \leq \Vert T_{\phi}^p \Vert_{\CB(L^p(\cL G))}.
\]
\end{thm}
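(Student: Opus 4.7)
The strategy is a transference argument based on Lemma \ref{lem=recover_phi_from_Iphi}, extending Neuwirth--Ricard to non-discrete groups. By Theorem \ref{thm=mult_de_symb_continu}, the cb-norm inequality reduces to proving the corresponding bound for the Schur multiplier on $\cS_p(\ell^2 F)$ for every finite subset $F = \{g_1,\dots,g_n\} \subset G$. By duality and matrix amplification by $M_m$, this further reduces to establishing, for all $m \in \N$ and all $a = (a_{ij})$, $b = (b_{ij}) \in \cS_p(\ell^2_n \otimes \ell^2_m)$ (viewed as $n \times n$ matrices with $M_m$-block entries), the bilinear bound
\[
\Bigl|\sum_{i,j}\phi(g_ig_j^{-1})\,\Tr(a_{ij}b_{ij}^*)\Bigr| \leq \|T_\phi^p\|_{cb}\, \|a\|_{\cS_p}\, \|b\|_{\cS_q},
\]
where $q$ is the conjugate exponent of $p$.

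Given the unit-norm nets $x_\alpha \in L^p(\cL G)$, $y_\alpha \in L^q(\cL G)$ produced by Lemma \ref{lem=recover_phi_from_Iphi}, I would lift $a,b$ to matrix-valued $L^p$-elements by setting
\[
X^{(\alpha)} := \sum_{i,j} e_{ij} \otimes a_{ij} \otimes \lambda_{g_i} x_\alpha \lambda_{g_j}^{-1} \in M_n \otimes M_m \otimes L^p(\cL G),
\]
and defining $Y^{(\alpha)}$ analogously from $b$ and $y_\alpha$. The algebraic heart of the proof is the factorization
\[
X^{(\alpha)} = U\,(a \otimes x_\alpha)\,V^*,\qquad U := \sum_i e_{ii}\otimes 1 \otimes \lambda_{g_i},\quad V := \sum_j e_{jj}\otimes 1 \otimes \lambda_{g_j},
\]
where $U,V$ are unitaries in $M_n \otimes M_m \otimes \cL G$. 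Since left and right multiplication by unitaries is isometric on $L^p(M_n \otimes M_m \otimes \cL G)$, this yields $\|X^{(\alpha)}\|_{L^p} = \|a \otimes x_\alpha\|_{L^p} = \|a\|_{\cS_p}$, and similarly $\|Y^{(\alpha)}\|_{L^q} = \|b\|_{\cS_q}$. Conversely, the translated symbol $\phi_{ij}(h) := \phi(g_i h g_j^{-1})$ satisfies $T_{\phi_{ij}}^p(x) = \lambda_{g_i}^{-1} T_\phi^p(\lambda_{g_i} x \lambda_{g_j}^{-1})\lambda_{g_j}$, so it has the same cb-norm as $\phi$; applying Lemma \ref{lem=recover_phi_from_Iphi} to $\phi_{ij}$ gives $\langle T_\phi^p(\lambda_{g_i} x_\alpha \lambda_{g_j}^{-1}), (\lambda_{g_i} y_\alpha \lambda_{g_j}^{-1})^*\rangle \to \phi(g_ig_j^{-1})$. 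Expanding the duality pairing of $(T_\phi^p \otimes \mathrm{id})(X^{(\alpha)})$ against $(Y^{(\alpha)})^*$ via the trace on $M_n \otimes M_m$ tensored with the $L^p$--$L^q$ pairing on $L^p(\cL G) \times L^q(\cL G)$ yields $\lim_\alpha \langle (T_\phi^p \otimes \mathrm{id})(X^{(\alpha)}), (Y^{(\alpha)})^*\rangle = \sum_{i,j}\phi(g_ig_j^{-1})\Tr(a_{ij}b_{ij}^*)$, and bounding the left side by $\|T_\phi^p\|_{cb}\|X^{(\alpha)}\|_{L^p}\|Y^{(\alpha)}\|_{L^q}$ establishes the claim.

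The main obstacle is the hypothesis $p/q \in \Q$ in Lemma \ref{lem=recover_phi_from_Iphi}, which yields the above chain of inequalities only for a dense set of exponents. The endpoints $p=\infty$ (the classical Bo\.zejko--Fendler theorem) and $p=1$ (by duality) must be handled separately, and the general $p \in (1,\infty)$ case then follows by density/approximation. A secondary technical subtlety is the rigorous verification that $U,V$ are unitaries acting isometrically on $L^p$ in the Connes--Hilsum framework for non-unimodular $G$, as well as the factorization $X^{(\alpha)} = U(a\otimes x_\alpha)V^*$ at the level of (possibly unbounded) operators; this uses the bimodule structure of $L^p(\cL G)$ over $\cL G$, with appropriate care for the modular function.
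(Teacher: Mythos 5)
Your proposal is correct and follows essentially the same route as the paper's proof: reduction to finite subsets via Theorem \ref{thm=mult_de_symb_continu}, lifting $a,b$ to $A_\alpha=(a_{i,j}\lambda(s_i)x_\alpha\lambda(s_j^{-1}))$ conjugated from $a\otimes x_\alpha$ by the diagonal unitary $\sum_i e_{i,i}\otimes\lambda(s_i)$, the translated symbols $\phi_{i,j}(s)=\phi(s_i s s_j^{-1})$ fed into Lemma \ref{lem=recover_phi_from_Iphi}, and a final duality pairing; the restriction $p/q\in\Q$ is handled in the paper, as you anticipate, by density in $p$ (via convexity of $\log\|M_\phi^p\|$ from interpolation and reiteration).
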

\begin{proof}

Let $q$ be the conjugate exponent of $p$. The values of $p$ for which $p/q \in \Q$ form a dense subset of $[1,\infty]$. It suffices to prove the theorem for such a $p$. Indeed, since $M_\phi^p$ is defined by means of interpolation, it follows from \cite[Theorem 4.1.2 and Section 2.4 (6)]{BerghLof} and the re-iteration theorem that the logarithm of $\Vert M_{\phi}^p \Vert_{\CB(\cS_p(L^2G))}$ is a convex, hence continuous function in $p$. Similarly, $\Vert T_{\phi}^p \Vert_{\CB(\cS_p(L^2G))}$  is continuous in $p$. 

Let $F = \{ s_1, \ldots, s_k \} \subseteq G$ be a finite subset, and consider two matrices $a,b \in M_k(\C)$. We claim that
\[ |\sum_{i,j} \phi(s_i s_j^{-1}) a_{i,j}  b_{j,i}| \leq \Vert T_{\phi}^p \Vert_{\CB(L^p(\cL G))} \|a\|_{\cS_p^k} \|b\|_{\cS_q^k}.\]
By Theorem \ref{thm=mult_de_symb_continu} this implies that $\Vert M_{\phi}^p \Vert_{B(\cS_p(L^2G))} \leq \Vert T^p_\phi  \Vert_{\CB(L^p(\cL G)) }$. The inequality $\Vert M_\phi^{p} \Vert_{\CB(\cS_p(L^2G))} \leq \Vert T^p_\phi  \Vert_{\CB(L^p(\cL G)) }$ follows similarly, by taking $a_{i,j} \in \cS_p^n$, $b_{i,j} \in \cS_q^n$ instead of $\mathbb{C}$.

Take $x_\alpha,y_\alpha$ given by Lemma \ref{lem=recover_phi_from_Iphi}, and consider the element $A_\alpha \in \cS_p^k \otimes L^p(\cL G)$ and $B_\alpha \in \cS_q^k \otimes L^q(\cL G)$ defined by
\[A_\alpha = (a_{i,j} \lambda(s_i) x_\alpha \lambda(s_j^{-1}))_{i,j \leq k}, B_\alpha = (b_{i,j} \lambda(s_i) y_\alpha \lambda(s_j^{-1}))_{i,j \leq k}.\]
$A_\alpha$ is obtained from $a \otimes x_\alpha$ by conjugating by the unitary $\sum_i e_{i,i} \otimes \lambda(s_i)$, so that $\|A_\alpha\|_{p} = \|a \otimes x_\alpha\|_p = \|a\|_{\cS_p^k}$. Similarly $\|B_\alpha\|_q = \|b\|_{\cS_q^k}$. Notice now that 
\[ \lambda(s_i^{-1}) T_\phi^p(\lambda(s_i) x_\alpha \lambda(s_j)^{-1}) \lambda(s_j) = T_{\phi_{i,j}}^p(x_\alpha)\]
where $\phi_{i,j}(s) = \phi(s_i s s_j^{-1})$. In particular $\phi_{i,j} \in \MCB(G)$ and $\phi_{i,j}(e) = \phi(s_i s_j^{-1})$. Lemma \ref{lem=recover_phi_from_Iphi} gives 
\begin{eqnarray*} \sum_{i,j} \phi(s_i s_j^{-1}) a_{i,j}  b_{j,i} & = & \lim_\alpha \sum_{i,j} a_{i,j}  b_{j,i} \langle T_{\phi_{i,j}}^p(x_\alpha),y_\alpha \rangle \\ & = & \lim_\alpha 
\langle (id \otimes T_\phi^p)(A_\alpha),B_\alpha\rangle.\end{eqnarray*}
The claim follows from the inequality \begin{eqnarray*}|\langle (id \otimes T_\phi^p)(A_\alpha),B_\alpha\rangle| &\leq& \Vert T_{\phi}^p \Vert_{\CB(L^p(\cL G))} \|A_\alpha\|_{p} \|B_\alpha\|_{q} \\&=& \Vert T_{\phi}^p \Vert_{\CB(L^p(\cL G))} \|a\|_{\cS_p^k} \|b\|_{\cS_q^k},\end{eqnarray*} valid for every $n$.
\end{proof}


\section{Amenable groups}\label{Sect=amenable}

We prove that the answer to Question \ref{Que-One} is affirmative for amenable locally compact groups. This generalizes  beyond (amenable) discrete groups the result by Neuwirth and Ricard \cite{NeuRic}. 

\begin{thm}\label{thm=contraction_well_defined}
Let $F \subset G$ be a relatively compact Borel subset with positive measure, and $P_F:L_2(G) \to L_2(F)$ the orthogonal projection. Let $1 \leq p \leq \infty$. For any $x \in L^p(\cL G)$, $P_F x P_F \in \cS_p(L_2 G)$ and $\| P_F x P_F\|_{\cS_p(L^2F)} \leq |F|^{1/p} \|x\|_{L^p(\cL G)}$.
\end{thm}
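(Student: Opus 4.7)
The plan is to establish the bound at the endpoints $p=1$ and $p=\infty$ and then obtain the general case by interpolation, using Theorem \ref{thm=interpolation_BB} to handle the compatibility issues that arise because, when $G$ is non-unimodular, $L^1(\cL G) \cap L^\infty(\cL G) = \{0\}$. This mirrors the structure used in the proof of Definition-Proposition \ref{dfprop=def__Tphip}.

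At $p = \infty$, the map $x \mapsto P_F x P_F$ is a complete contraction from $\cL G$ to $B(L^2 F)$ since $P_F$ is an orthogonal projection, matching the desired bound with $|F|^{1/\infty}=1$.

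At $p = 1$, I would use Lemma \ref{lemma=isom_AG_L1} to identify $x \in L^1(\cL G)$ isometrically with $\varphi \in A(G)$, so that $P_F x P_F$ is the integral operator on $L^2 F$ with kernel $\varphi(ts^{-1})$. Given $\varepsilon > 0$, write $\varphi(g) = \langle \lambda_g \xi, \eta\rangle$ with $\|\xi\|_2 \|\eta\|_2 \le (1+\varepsilon) \|\varphi\|_{A(G)}$. The left-invariance of Haar measure (substitute $u = g^{-1} r$ in $\int \xi(g^{-1}r) \overline{\eta(r)} \, dr$, then put $g=ts^{-1}$) gives
\[ \varphi(ts^{-1}) = \int_G \xi(su) \overline{\eta(tu)} \, du, \qquad s,t \in G,\]
which exhibits $P_F x P_F$ as $V W^*$, where $V, W \colon L^2 G \to L^2 F$ are the Hilbert--Schmidt operators with integral kernels $V(s,u) = \xi(su)$ and $W(t,u) = \eta(tu)$. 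A further application of left-invariance gives $\|V\|_{\cS_2}^2 = |F|\|\xi\|_2^2$ and $\|W\|_{\cS_2}^2 = |F| \|\eta\|_2^2$, and H\"older's inequality in Schatten classes yields $\|P_F x P_F\|_{\cS_1(L^2 F)} \le |F|\|\xi\|_2 \|\eta\|_2$. Letting $\varepsilon \to 0$ completes the $p=1$ bound.

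For intermediate $p$, the two endpoint maps are compatible with respect to the interpolation pair $(A_0, A_1)$ furnished by Theorem \ref{thm=interpolation_BB}: on the common dense subspace of elements of the form $j_\infty^{-1}(\lambda(f))$ with $f$ continuous and compactly supported, both endpoint maps produce the integral operator on $L^2 F$ with kernel $\Delta(t^{-1}) f(st^{-1}) \mathbf{1}_F(s) \mathbf{1}_F(t)$ from \eqref{eq=kernel_lambdaf}. Complex interpolation then yields a bounded map $L^p(\cL G) \to \cS_p(L^2 F)$ with norm at most $|F|^{1/p}$, and density and continuity identify it with the map from Proposition \ref{prop=def_of_PFxPF}. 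The main obstacle is the compatibility verification inside the interpolation framework, for which Theorem \ref{thm=interpolation_BB} is designed; the substantive analytic input—the $p=1$ estimate—reduces to the elementary Haar-invariance computation above.
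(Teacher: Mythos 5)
Your proposal is correct and follows essentially the same route as the paper: the $p=\infty$ case is trivial, the $p=1$ case is handled by exactly the same factorization of the kernel $\varphi(ts^{-1})$ as $VW^{*}$ with Hilbert--Schmidt kernels $\xi(su)$ and $\eta(tu)$ of norm $|F|^{1/2}\|\xi\|$, $|F|^{1/2}\|\eta\|$, and the general case follows by complex interpolation through Theorem \ref{thm=interpolation_BB} (where the paper phrases the compatibility as the single map $u_F$ being bounded $A_0\to A_{0,F}$ with norm $1$ and $A_1\to A_{1,F}$ with norm $|F|$, rather than re-verifying agreement on a dense subspace, but this is the same argument). The only minor imprecision is that the common dense subspace should be the span $E$ of the $j_\infty^{-1}(\lambda(f)^{*}\lambda(g))$ from Theorem \ref{thm=interpolation_BB}\eqref{item=intersection_big}, not all $j_\infty^{-1}(\lambda(f))$ with $f\in C_c(G)$, since the latter need not lie in $A_0\cap A_1$.
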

\begin{proof}
When $p=\infty$ this is obvious. 

Assume $p=1$, and take $x \in L_1(\cL G)$. Let $\phi \in A(G)$ corresponding to $x$, and write $\phi(s) =\langle \lambda(s) \xi,\eta\rangle$ for $\xi,\eta \in L^2(G)$ with $\|\xi\| \|\eta\| = \|x\|_{L^1(\cL G)}$. By Lemma \ref{lemma=isom_AG_L1}, $P_F x P_F = ( \langle \lambda(s^{-1})\xi,\lambda(t^{-1}) \eta\rangle)_{s,t \in F} = A B^*$ where $A = (\xi(s t))_{s \in F,t\in G}$ and $B = (\eta(st))_{s \in F, t \in G}$. But $\|A\|_{\cS_2(L^2G)}^2 = \iint |\xi(st)|^2 1_{s \in F} dt ds = |F| \|\xi\|^2$, and similarly $\|B\|_{\cS_2(L^2G)} = |F|\|\eta\|^2$, so that $\| P_F x P_F\|_{\cS_1(L^2G)} \leq \|A\|_{\cS_2(L^2G)} \|B\|_{\cS_2(L^2G)} =|F| \|x\|_{L^1(\cL G)}$. This proves the case $p=1$. 

The general case follows by the interpolation Theorem \ref{thm=interpolation_BB}: the cases $p=1$ and $p=\infty$ above say that $u_F$ maps $A_0 \to A_{0,F}$ with norm $1$ and $A_1 \to A_{1,F}$ with norm $|F|$. Hence it maps $A_{1/p} \to A_{1/p,F}$ with norm less than $|F|^{1/p}$. By \eqref{item=diagram} this is exactly saying that $P_F x P_F \in \cS_p(L^2F)$ and $\|P_F x P_F \|_{\cS_p(L^2F)} \leq |F|^{1/p} \|x\|_{L_p(\cL G)}$.
\end{proof}

\begin{thm}\label{thm=embedding_in_ultraproduct} Let $G$ be an amenable locally compact group and $1< p <\infty$. Then there is an ultrafilter $\cU$ on some set and a completely isometric embedding $i:L^p(\cL G) \to \prod_{\cU} \cS_p(L^2G)$ that intertwines Fourier and Schur multipliers.
\end{thm}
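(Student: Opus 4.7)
The plan is to adapt the Neuwirth--Ricard embedding \cite{NeuRic} to the locally compact setting using the corner maps from Theorem \ref{thm=contraction_well_defined}. By amenability, fix a F\o{}lner net $(F_n)_{n\in I}$ of relatively compact Borel sets of positive measure, so that $|KF_n \triangle F_n|/|F_n| \to 0$ for every compact $K \subset G$, and let $\cU$ be an ultrafilter on $I$ refining the filter of final segments. Define
\[ u_n : L^p(\cL G) \to \cS_p(L^2 G), \qquad u_n(x) = |F_n|^{-1/p}\, P_{F_n} x P_{F_n}, \]
which is a contraction by Theorem \ref{thm=contraction_well_defined}, and set $i(x) = (u_n(x))_\cU \in \prod_\cU \cS_p(L^2 G)$. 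The intertwining of $i$ with Fourier and Schur multipliers is automatic: Definition-Proposition \ref{dfprop=def__Tphip} gives $P_{F_n} T_\phi^p(x) P_{F_n} = M_\phi(P_{F_n} x P_{F_n})$, and $M_\phi^p$ on $\cS_p(L^2 G)$ restricts on $\cS_p(L^2 F_n)$ to $M_\phi$ by Theorem \ref{thm=mult_de_symb_continu}; hence $u_n \circ T_\phi^p = M_\phi^p \circ u_n$ for every $n$.

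The substance is to prove that $i$ is isometric, since we already have $\|i(x)\| \leq \|x\|_p$. I will argue by duality. Introduce the analogous contractions $v_n(y) = |F_n|^{-1/q} P_{F_n} y P_{F_n}$ for $y \in L^q(\cL G)$ ($q$ the conjugate exponent). It is enough to show
\begin{equation}\label{eq:plan-key-limit}
\lim_{\cU} \Tr\bigl( u_n(x)\, v_n(y) \bigr) \;=\; \int xy \,d\psi
\end{equation}
for all $x \in L^p(\cL G)$ and $y \in L^q(\cL G)$, as this forces $\|i(x)\|\geq\sup_{\|y\|_q\leq 1} |\int xy\, d\psi| = \|x\|_p$. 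The starting identity is: for $z \in L^1(\cL G)$ corresponding via Lemma \ref{lemma=isom_AG_L1} to $\omega \in A(G)$, the kernel of $P_F z P_F$ is $\omega(ts^{-1})\mathbf{1}_{F\times F}$, and the trace formula \eqref{eq=formula_for_trace} gives $\Tr(P_F z P_F)=|F|\,\omega(e)=|F|\int z\,d\psi$. Applied to $z = xy \in L^1(\cL G)$, this yields $|F_n|^{-1}\Tr(P_{F_n} xy P_{F_n})=\int xy\,d\psi$ \emph{exactly}, for every $n$.

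Thus \eqref{eq:plan-key-limit} reduces to the vanishing of the ``edge term'' $\varepsilon_n := |F_n|^{-1}|\Tr(P_{F_n} x (1-P_{F_n}) y P_{F_n})|$, and this is where the F\o{}lner property enters via Theorem \ref{thm=interpolation_BB}. On the dense subspace of compactly supported elements $x = j_p(a)$ with $j_\infty(a)=\lambda(f)$ for $f \in A_c(G)$ supported in a compact set $K$, the kernel formula \eqref{eq=kernel_lambdaf} forces $P_{F_n} x (1-P_{F_n})=P_{F_n} x P_{K^{-1} F_n \setminus F_n}$; a Schatten H\"older estimate combined with Theorem \ref{thm=contraction_well_defined} applied to the relatively compact set $F_n \cup K^{-1}F_n$ will then bound $\varepsilon_n$ by a multiple of $(|K^{-1} F_n\triangle F_n|/|F_n|)^{1/p}\|x\|_p\|y\|_q$, which vanishes by the F\o{}lner property. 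A density argument extends \eqref{eq:plan-key-limit} to arbitrary $x,y$, and complete isometry is obtained by applying the argument to $G \times \mathrm{SU}(2)$ (still amenable) and using $\cL \mathrm{SU}(2) \simeq \bigoplus_{n\geq 1} M_n$, exactly as in the proof of Definition-Proposition \ref{dfprop=def__Tphip}.

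The main obstacle will be the off-diagonal Schatten estimate driving $\varepsilon_n \to 0$ in the non-unimodular case, where the modular function appears in the kernels and breaks the left-right symmetry. The naive corner bound from Theorem \ref{thm=contraction_well_defined} applied to $P_{A\cup B} x P_{A\cup B}$ only gives $|A\cup B|^{1/p}\|x\|_p$, which is comparable to $|F_n|^{1/p}$ and far too weak; obtaining the sharper asymmetric bound for $\|P_A x P_B\|_{\cS_p}$ when $|A|\ll |B|$ is the crux, and is precisely what requires restricting to the dense compactly supported subspace and leveraging the full interpolation package of Theorem \ref{thm=interpolation_BB}.
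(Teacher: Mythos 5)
Your setup --- the corner maps $u_n(x)=|F_n|^{-1/p}P_{F_n}xP_{F_n}$, the reduction of isometry to the duality identity $\lim_\cU\Tr(u_n(x)v_n(y))=\int xy\,d\psi$, and the $G\times SU(2)$ trick for complete isometry --- coincides with the paper's. The divergence, and the gap, lies in how the duality identity is proved. You split $\Tr\bigl((P_{F_n}xP_{F_n})(P_{F_n}yP_{F_n})\bigr)$ into the exact main term $\Tr(P_{F_n}(xy)P_{F_n})=|F_n|\int xy\,d\psi$ plus an edge term, and you need $\varepsilon_n=|F_n|^{-1}\bigl|\Tr(P_{F_n}xP_{E_n}yP_{F_n})\bigr|$ (with $E_n=K^{-1}F_n\setminus F_n$) to vanish. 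As you yourself observe, the only corner estimate available, Theorem \ref{thm=contraction_well_defined}, gives $\|P_{F_n}xP_{E_n}\|_{\cS_p}\le|F_n\cup E_n|^{1/p}\|x\|_p\sim|F_n|^{1/p}\|x\|_p$, which after H\"older yields $\varepsilon_n=O(1)$, not $o(1)$. The asymmetric bound you would need, something like $\|P_AxP_B\|_{\cS_p}\lesssim(|A||B|)^{1/(2p)}\|x\|_p$, is plausible (it holds at $p=1$ via the factorization $AB^*$ in the proof of Theorem \ref{thm=contraction_well_defined}, and at $p=\infty$ trivially), but it is not a consequence of anything stated: Theorem \ref{thm=interpolation_BB} only packages square corners $u_F$, so you would have to redo the interpolation machinery for rectangular corners $P_A\cdot P_B$; and in the non-unimodular case you would additionally have to justify the operator identity $P(xy)P=(PxP)(PyP)+(PxP_{E_n})(P_{E_n}yP)$ for the unbounded Connes--Hilsum elements $x,y$, which Proposition \ref{prop=def_of_PFxPF} does not cover. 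So the step you flag as ``the main obstacle'' is a genuine missing piece, not a routine verification.

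The paper closes the argument differently and avoids the edge term altogether: it computes the pairing only at the endpoint $(p,q)=(\infty,1)$, for $x=\lambda(f)$ with $f\in L^1(G)$ and $y^*$ corresponding to $\varphi\in A(G)$, where the change of variables $u=st^{-1}$ produces the F\o{}lner ratio $|F_\alpha\cap uF_\alpha|/|F_\alpha|\to1$ inside a dominated-convergence limit; it then uses the compatibility relation \eqref{eq=duality_compatibility} together with \eqref{item=diagram} to see that neither $\langle i_p(j_p(a)),i_q(j_q(b))^*\rangle$ nor $\langle j_p(a),j_q(b)^*\rangle$ depends on $p$ for $a,b\in A_0\cap A_1$, so the endpoint identity propagates to all $1<p<\infty$ by density of $j_p(A_0\cap A_1)$. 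If you want to complete your write-up, the cleanest repair is to replace the edge-term estimate by this endpoint-plus-interpolation step; otherwise you must prove the rectangular-corner Schatten bound from scratch.
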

\begin{proof}
By amenability there exists a  F\o{}lner net $(F_\alpha)_\alpha$~: $F_\alpha$ are compact subsets with positive measure such that $\lim_\alpha |F_\alpha \cap g F_\alpha|/|F_\alpha| =1$ for all $g \in G$. Choose an ultrafilter $\cU$ refining the net $\alpha$, denote by $P_\alpha$ the orthogonal projection on $L^2(F_\alpha)$ and consider the map $i_p:L^p(\cL G) \to \prod_{\cU} \cS_p(L^2 F_\alpha)$ defined by $i_p(x) = (P_\alpha x P_\alpha/|F_\alpha|^{1/p})_\alpha$. By Theorem \ref{thm=contraction_well_defined}, $i_p$ is a well defined contraction, and it intertwines Fourier and Schur multipliers by definition of Fourier multipliers. We use a duality argument to prove that it is isometric. To do this take $q$ the conjugate exponent of $p$ and consider $x \in L^p(\cL G)$ and $y \in L^q(\cL G)$. We claim that
\begin{equation}\label{eq=ipiq} \langle i_p(x),i_q(y)^*\rangle = \langle x,y^*\rangle.\end{equation}
where on the left-side, the pairing is given by $\langle (x_\alpha)_\alpha,(y_\alpha) \rangle =\lim_{\cU} \Tr(x_\alpha y_\alpha)$ and on the right-side the pairing is given by $\langle x,y\rangle = \int xy d\psi$. Since $L^q$ is isometrically the dual of $L^p$ for this pairing, this clearly implies that $i_p$ (and $i_q$) are isometries.

Let us first prove \eqref{eq=ipiq} in the particular case that $p=\infty,q=1$, $x = \lambda(f)$ with $f \in L^1(G)$ and $y^*$ corresponds to $\varphi \in  A(G)$. We compute $\langle  i_\infty(x), i_1(y)^* \rangle$, using Lemma \ref{lemma=isom_AG_L1}, \eqref{eq=kernel_lambdaf} and then \eqref{eq=formula_for_trace},
\[
\begin{split}
&  \langle  i_\infty(x), i_1(y)^* \rangle\\
 = &
\lim_{\alpha,\cU} \frac{1}{\vert F_\alpha \vert} \Tr\left( P_\alpha \lambda(f) P_\alpha y^* P_\alpha \right)\\
= &  \lim_{\alpha,\cU} \frac{1}{\vert F_\alpha \vert} \Tr\left( (f(st^{-1}) \Delta(t^{-1})_{s,t \in F_\alpha} (\varphi(ts^{-1}))_{s,t \in F_\alpha} \right)\\
= &  \lim_{\alpha,\cU} \frac{1}{\vert F_\alpha \vert} \int_{F_\alpha} \int_{F_\alpha} \Delta(t^{-1}) f(st^{-1}) \varphi(st^{-1}) ds dt \\
 = & \lim_{\alpha,\cU} \frac{1}{\vert F_\alpha \vert} \int_{G} f(u) \varphi(u)(\int_{F_\alpha} \chi_{F_\alpha}(u^{-1}s) ds) du \\
= & \lim_{\alpha,\cU}\int_{G} f(u) \varphi(u) \frac{\vert F_\alpha \cap uF_\alpha\vert}{\vert F_\alpha \vert} du \\
= & \int_G f(u) \varphi(u) du =\langle x,y^*\rangle.
\end{split}
\]
On the fifth line we made the change of variable $u=st^{-1}$, and the last line is justified by the dominated convergence theorem. 

We now use the interpolation Theorem \ref{thm=interpolation_BB} and its notation. We claim that $\langle i_p(j_p(a)) ,i_q(j_q(b))^*\rangle = \langle j_p(a),j_q(b)^*\rangle$ holds for all $p,q$ with $1/p+1/q=1$ and all $a,b \in A_0 \cap A_1$. This will conclude the proof of \eqref{eq=ipiq} since by general interpolation theory, $j_p(A_0 \cap A_1)$ is dense in $L^p(\cL G)$ for every $1<p<\infty$. By the interpolation equation \eqref{eq=duality_compatibility} and \eqref{item=diagram} in Theorem \ref{thm=interpolation_BB} neither $\langle i_p(j_p(a)) ,i_q(j_q(b))^*\rangle$ nor $\langle j_p(a),j_q(b)^*\rangle$ depend on $p$, and we know (case $p=\infty$) that they are equal when $a \in E$.   These quantities are therefore also (case $p=1$) equal when $b \in E$, and by the norm density of $j_1(E)$ in $L^1(\cL G)$, they are equal for every $b$. This proves \eqref{eq=ipiq}.

To prove that $i_p$ is completely isometric, the same proof can be applied. Otherwise we can use a classical argument \cite[Theorem 1.6]{CanHaa} and consider $K=SU(2)$. This is very convenient because as is well-known $\cL K = \oplus_{n \geq 1} M_n$, but all we need is that $K$ is a compact group with irreducible unitary representations of arbitrarily large dimension. Remark that $G \times K$ is amenable and that $F_\alpha \times K$ is a F\o lner net. By what we just proved, $i_p \otimes id: L^p( \cL (G \times K)) \to \prod_{\cU} \cS_p(L^2(G \times K))$ is isometric. But since $\cL (G \times K) = \cL G   \otimes \cL K \simeq \oplus_n M_n(\cL G)$, this means that
\[ id \otimes i_p: \ell^p\{ \cS_p^n \otimes L^p(\cL G), n \geq 1\} \to \prod_{\cU} \ell^p\{ \cS_p^n \otimes \cS_p(L^2 G), n\geq 1\}\]
is isometric, which implies that $i_p$ was completely isometric.
\end{proof}
As a straightforward consequence.
\begin{corollary}\label{Cor-MainEquality}
Let $G$ be an amenable group and let $1 \leq p \leq \infty$. Let $\phi \in \MCB(G)$. Then,
\[
\Vert T_\phi^p \Vert_{B(L^p(\cL G))} \leq \Vert M_\phi^p \Vert_{B(\cS_p(L^2 G))},
\]
\[
\Vert T_\phi^p \Vert_{\CB(L^p(\cL G))} \leq \Vert M_\phi^p \Vert_{\CB(\cS_p(L^2 G))}.
\]
\end{corollary}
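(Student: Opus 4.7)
The plan is to invoke Theorem \ref{thm=embedding_in_ultraproduct} as a black box and read off the inequality by a one-line compression argument. Fix first $1<p<\infty$ and let $i_p: L^p(\cL G)\to \prod_{\cU} \cS_p(L^2G)$ be the completely isometric embedding provided by that theorem, which satisfies $i_p\circ T_\phi^p = (\prod_\cU M_\phi^p)\circ i_p$. For any $x \in L^p(\cL G)$ the isometry of $i_p$ together with the intertwining gives
\[
\|T_\phi^p(x)\|_{L^p(\cL G)} = \|(\textstyle\prod_\cU M_\phi^p)(i_p(x))\|_{\prod_\cU \cS_p(L^2G)} \leq \|M_\phi^p\|_{B(\cS_p(L^2G))}\, \|x\|_{L^p(\cL G)},
\]
because the operator norm of the ultraproduct of copies of a single bounded map is exactly that of the map. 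For the completely bounded estimate, tensor by $\cS_p^n$: the map $id_{\cS_p^n}\otimes i_p$ is still isometric by complete isometry of $i_p$, and the intertwining tensorises with the identity since $T_\phi^p$ and $M_\phi^p$ act entrywise on matrix-valued elements. Repeating the above display with $id\otimes T_\phi^p$ and $id\otimes M_\phi^p$, then taking the supremum over $n$, yields the cb inequality.

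It remains to handle the endpoints $p\in\{1,\infty\}$. For $p=\infty$ the inequalities are immediate since $T_\phi^\infty=T_\phi$ is simply the restriction of $M_\phi=M_\phi^\infty$ to $\cL G \subset B(L^2G)$. For $p=1$, the cleanest route is duality: by construction in Definition-Proposition \ref{dfprop=def__Tphip}, $T_\phi^1$ is the predual of $T_{\check\phi}^\infty$ with $\check\phi(s)=\phi(s^{-1})$, and similarly $M_\phi^1$ is the predual of $M_{\check\phi}^\infty$, while $\|\check\phi\|_{\MCB(G)}=\|\phi\|_{\MCB(G)}$; so the $p=1$ inequalities reduce to the $p=\infty$ ones (for both the bounded and the cb norms). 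Alternatively, one may appeal to complex interpolation: $p\mapsto \log\|T_\phi^p\|$ and $p\mapsto \log\|M_\phi^p\|$ are convex hence continuous on $[1,\infty]$, so the interior inequality propagates to the endpoints.

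Since the difficult analytic content was already done in Theorem \ref{thm=embedding_in_ultraproduct}, no real obstacle remains here; the only mild point of care is checking that the intertwining relation for $i_p$ survives when one tensors with $\cS_p^n$, which is a direct consequence of the fact that both Fourier and Schur multipliers act diagonally on matrix coefficients, together with the compatibility of ultraproducts with $\cS_p^n$-amplifications.
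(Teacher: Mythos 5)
Your argument is correct and is exactly the paper's (implicit) proof: the corollary is stated there as an immediate consequence of Theorem \ref{thm=embedding_in_ultraproduct}, namely isometry plus intertwining gives the norm bound, and complete isometry plus tensorising with $\cS_p^n$ gives the cb bound. Your explicit treatment of the endpoints $p\in\{1,\infty\}$ (which the theorem does not cover) is a welcome supplement, and the duality route via $\widetilde\phi(s)=\phi(s^{-1})$ is the right one to rely on there --- the alternative ``convex hence continuous on $[1,\infty]$'' fallback is the weaker of your two endpoint arguments, since a convex function on a closed interval need not be continuous at the endpoints without further input.
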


\section{Interpolation}\label{Sect=interpolation}

In this section, we collect the necessary results from \cite{TerpII} and \cite{Hilsum} in order to interpret non-commutative $L^p$-spaces as interpolation spaces. This is done in Theorem \ref{thm=interpolation_BB}. Using this theorem, we are able to prove our results in a self-contained way. The theorem is technical in nature and its conceptual consequences are stated in the earlier sections. 

Let $\cM$ be a von Neumann algebra, and consider $L_p(\cM)$ ($1 \leq p \leq \infty$) the associated non-commutative $L_p$-spaces together with a bilinear duality bracket $\langle \cdot,\cdot\rangle_{L_p,L_q}$ for $1/p+1/q=1$.  For simplicity we will only use this notion in two particular explicit cases: the first is when $\cM = B(\cH)$ for a Hilbert space $\cH$, and in this case $L_p(\cM)$ is $\cS_p(\cH)$ and the duality $\langle A,B\rangle_{L^p, L^q} = \Tr(AB)$. The second case is when $\cM'$ is equipped with a normal faithful weight $\psi$, $L^p(\cM)$ is the associated Connes-Hilsum space, and the duality is $\langle a,b\rangle = \int ab d\psi = \int ba d\psi$. The special case $\cM = \cL G$ is recalled in Section \ref{subsect=Connes-Hilsum}. The special case $\cM = B(\cH)$ and $\psi$ is the canonical trace on $B(\cH)'\simeq \C$ will then yield $L^p(\cM) = \cS_p(\cH)$.
 
\begin{dfn} Let $\cM$, $L^p(\cM)$ be as above. An \emph{interpolation scale for $\{(L_p(\cM), 1\leq p \leq \infty\}$} is a compatible pair $\overline A= (A_0,A_1)$ and a family of isometric isomorphisms $j_p:A_{1/p} \to L_p(\cM)$ such that for every $a ,b \in A_0 \cap A_1$ and every $1 \leq p,q \leq \infty$ satisfying $1/p+1/q=1$, 
\begin{equation}\label{eq=duality_compatibility} \langle j_p(a),j_q(b)^*\rangle_{L_p,L_q} = \langle j_1(a),j_\infty(b)^*\rangle_{L_1,L_\infty}.
\end{equation} 
\end{dfn}
In this definition $A_\theta \subset A_0 + A_1$ stands for the complex interpolation space between $A_0$ and $A_1$ with parameter $\theta$. We write $\Sigma(\overline{A}) =  A_0 + A_1$. 


\begin{thm}\label{thm=interpolation_BB}
Let $F \subset G$ be a relatively compact Borel subset.
There exist interpolation scales $\{\overline A= (A_0,A_1), j_p:A_{1/p} \to L_p(\cL G)\}$, and $\{\overline A_F= (A_{0,F},A_{1,F}), j_{p,F} :A_{1/p,F} \to \cS_p(L^2(F))\}$ with $A_{1,F} \subset A_{0,F}$ and a bounded map $u_F: \Sigma(\overline A) \to \Sigma(\overline A_F) = A_{0,F}$ such that
\begin{enumerate}[(i)]
\item \label{item=jpF}for all $1 \leq p \leq \infty$, $j_{p,F}$ extends to a continuous isomorphism $j_{p,F}:A_{0,F} \to B(L^2 F)$,  and $j_{p,F} \circ j_{q,F}^{-1}$ commutes with Schur multipliers for every $p,q$. 
\item \label{item=diagram}The following diagram commutes.
\begin{equation}\label{Diag-BlackBox}
 \xymatrix{
L_p(\cL G) \ar[d]^{x \mapsto P_F x P_F} & A_{1/p} \ar[l]^{j_p} \ar[d]^{u_F}\\
B(L^2 F)  & A_{0,F}\ar[l]^{j_{p,F}} }
\end{equation}
    
\item \label{item=intersection_big} The space $E = \textrm{span}\{ j_\infty^{-1}(\lambda(f)^\ast \lambda(g)), f,g \in C_c(G)\}$ is contained in $A_0 \cap A_1$. Furthermore, $\{j_1(a),a\in E\}$ is norm dense in $L^1(\cL G)$.
\item \label{item=semi_injective} Let $a \in \Sigma(\overline{A})$. If $u_F(a) = 0$ for every $F \subseteq G$ relatively compact, then $a = 0$.
\item \label{item=intersection_big2} For every $a = j_\infty^{-1}(\lambda(f_a)),b =j_\infty^{-1}(\lambda(f_b))\in E$, every $1\leq p,q,r \leq \infty$ such that $1/p+1/q=1/r$, there exist $c=j_\infty^{-1}(\lambda(f_c)),d = j_\infty^{-1}(\lambda(f_d))\in E$ such that $j_p(a)^* = j_p(c)$ and $j_p(a) j_q(b) = j_r(d)$. Moreover, $support(f_c) = support(f_a)^{-1}$, $support(f_d) \subset support(f_a) support(f_b)$, and if $f_a$ and $f_b$ are non-negative so are $f_c$ and $f_d$. If $f_a$ is non-negative, $j_1(a) \in L^1(\cL G)$ corresponds to a non-negative (i.e. point-wise non-negative) element in $A(G)$.
\end{enumerate}
\end{thm}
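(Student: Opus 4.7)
The plan is to build on the complex interpolation theory of non-commutative $L^p$-spaces associated to a weight, as developed by Terp \cite{TerpII} and Izumi \cite{Izumi}. For the $\cL G$ side, I would apply Terp's construction to the Plancherel weight on $\cL G$, realized via the Connes-Hilsum construction with respect to $\psi$ on $\rho(G)''$. This produces a compatible couple $\overline{A}=(A_0,A_1)$ together with isometric isomorphisms $j_p\colon A_{1/p}\to L^p(\cL G)$ satisfying the duality compatibility \eqref{eq=duality_compatibility}, with $A_0$ identified to $\cL G$ via $j_\infty$ and $A_1$ identified to $L^1(\cL G)$ via $j_1$. For the Schatten side, since $F$ is relatively compact $B(L^2 F)$ carries the usual finite trace and I take the classical pair $A_{0,F}=B(L^2F)$, $A_{1,F}=\cS_1(L^2F)\subset A_{0,F}$, with $j_{p,F}$ the natural inclusion. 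Classical interpolation of Schatten classes then gives $A_{1/p,F}=\cS_p(L^2F)$, and property \eqref{item=jpF} is immediate since Schur multipliers act entrywise and commute with the inclusions.

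\textbf{The corner map $u_F$ and the diagram.} I define $u_F$ on $A_0+A_1$ by two consistent prescriptions: for $a\in\cL G$, set $u_F(j_\infty^{-1}(a))=P_F a P_F\in B(L^2F)$, which is a contraction; for $x\in L^1(\cL G)$, set $u_F(j_1^{-1}(x))=P_F x P_F$. The latter is bounded of norm $\le |F|$ by a direct calculation using Lemma \ref{lemma=isom_AG_L1}, which writes $P_F x P_F$ as a product of two Hilbert-Schmidt operators whose kernels are localized to $F$ (this is exactly the $p=1$ case of Theorem \ref{thm=contraction_well_defined}, to be proved independently of the present theorem). The two prescriptions agree on $A_0\cap A_1$, since there both reduce to compressing a bounded operator by $P_F$. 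Calder\'on's theorem then yields bounded maps $A_{1/p}\to A_{1/p,F}=\cS_p(L^2F)$, which is the content of \eqref{item=diagram} once we identify the abstract interpolation of $u_F$ with the concrete corner map of Proposition \ref{prop=def_of_PFxPF}.

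\textbf{Properties \eqref{item=intersection_big}--\eqref{item=intersection_big2}.} For any $f,g\in C_c(G)$, $\lambda(f)^*\lambda(g)$ is bounded (hence in $A_0$) and corresponds via Lemma \ref{lemma=isom_AG_L1} to the matrix coefficient $s\mapsto\langle\lambda(s)g,f\rangle\in A(G)\cap C_c(G)$, giving an element of $L^1(\cL G)$, so $E\subset A_0\cap A_1$; density of $\{j_1(a):a\in E\}$ in $L^1(\cL G)$ comes from the fact that finite sums of such matrix coefficients are norm dense in the Fourier algebra. For \eqref{item=semi_injective}, choose an increasing exhaustion $F_n\nearrow G$: vanishing of $P_{F_n} a P_{F_n}$ for all $n$ on each factor $A_0, A_1$ forces $a=0$ on a common dense set of $L^2 G$, and the general case in $\Sigma(\overline{A})$ follows. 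For \eqref{item=intersection_big2}, the identities $\lambda(f)^*=\lambda(f^*)$ with $f^*$ as in \eqref{eq=def_of_xistar} and $\lambda(f)\lambda(g)=\lambda(f\ast g)$, together with the H\"older-type associativity of products of spatial $L^p$-elements recalled in Subsection \ref{subsect=Connes-Hilsum}, yield the adjoint and product formulas together with the stated support and positivity properties.

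\textbf{Main obstacle.} The principal technical point is the identification, in Step~2, of the abstractly interpolated map $u_F$ with the concrete corner map $x\mapsto P_F x P_F$ from Proposition \ref{prop=def_of_PFxPF}. This identification is easy to state but requires a careful density argument: both maps are continuous $L^p(\cL G)\to\cS_p(L^2F)$ and agree on $j_\infty(A_0)\cup j_1(A_1)$, so the task is to find a subspace (analytic under the complex interpolation, for example $j_p(E)$ or its analogue via spectral truncations) on which both descriptions can be computed explicitly and shown to coincide. Unpacking polar decompositions with the help of Proposition \ref{prop=reminders_hilsum}, and comparing the two descriptions on elements of the form $j_p(\lambda(f)^*\lambda(g))$, should close this gap.
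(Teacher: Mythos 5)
There is a genuine gap, and it sits exactly where the paper's introduction warns the difficulties lie: the non-unimodular case. You take the target couple to be the classical pair $(B(L^2F),\cS_1(L^2F))$ with the usual trace and with $j_{p,F}$ the natural inclusions, and you then define $u_F$ by $u_F(j_\infty^{-1}(a))=P_FaP_F$ on $A_0$ and $u_F(j_1^{-1}(x))=P_FxP_F$ on $A_1$, claiming these agree on $A_0\cap A_1$ because both "reduce to compressing a bounded operator by $P_F$". They do not. If $a=j_\infty^{-1}(x^\ast y)$ with $x,y\in\nphi$, Terp's formula \eqref{Eqn-LpIso} gives $j_1(a)=\Delta^{1/2}x^\ast\cdot[y\Delta^{1/2}]$, so the corner of the $L^1$-realization is $P_Fj_1(a)P_F=\Delta_F^{1/2}(P_Fx^\ast yP_F)\Delta_F^{1/2}$, which differs from $P_Fj_\infty(a)P_F=P_Fx^\ast yP_F$ by conjugation with $\Delta_F^{1/2}$ whenever $G$ is not unimodular. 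With your unweighted target couple the two prescriptions of $u_F$ are therefore incompatible on $A_0\cap A_1$, Calder\'on's interpolation theorem does not apply, and the diagram in \eqref{item=diagram} cannot commute for $p=1$ and $p=\infty$ simultaneously. The fix in the paper is to equip $B(L^2F)$ with the weighted trace $\Tr_\Delta(x)=\Tr(\Delta_F^{1/2}x\Delta_F^{1/2})$ and to take $j_{p,F}(x)=\Delta_F^{1/(2p)}x\Delta_F^{1/(2p)}$; the conjugations by $\Delta_F^{\pm 1/(2p)}$ then exactly absorb the spatial-derivative factors hidden in $j_p$, which is what turns \eqref{eq=uF_compatible} into an identity (and is incidentally why \eqref{item=jpF} only asserts that $j_{p,F}\circ j_{q,F}^{-1}$ \emph{commutes with} Schur multipliers rather than being the identity).

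The same oversight propagates to your treatment of \eqref{item=intersection_big2}: it is not true that $j_p(a)j_q(b)=j_r\bigl(j_\infty^{-1}(\lambda(f_a\ast f_b))\bigr)$, because the powers of the spatial derivative sitting inside $j_p(a)$ and $j_q(b)$ do not commute past the operators. The correct product element is $d=\sigma_{i/2q}(a)\,\sigma_{i/2p}(b)$, i.e.\ $f_d=(f_a\Delta^{-1/2q})\ast(f_b\Delta^{-1/2p})$, and likewise $f_c(s)=\overline{f_a(s^{-1})}\Delta(s^{-1})$; verifying these requires the closure manipulations from Hilsum together with Terp's commutation lemma for $\sigma_t$, not just $\lambda(f)\lambda(g)=\lambda(f\ast g)$ and $\lambda(f)^\ast=\lambda(f^\ast)$. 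For unimodular $G$ (where $\Delta\equiv 1$) your argument collapses to the paper's and is correct; for general locally compact $G$ it needs these modular corrections throughout, and your sketch of \eqref{item=semi_injective} would likewise have to be upgraded to the duality computation against all of $\mphi$ rather than an appeal to a "common dense set of $L^2G$".
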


 The proof will use Terp's construction \cite{TerpII} of non-commutative $L^p$-spaces as interpolation spaces, that we now recall.

The construction starts from a triple $(\cM, \varphi, \psi)$. Here, $\cM$ is a von Neumann algebra.   $\varphi$ is  a normal, semi-finite, faithful weight on $\cM$ and $\psi$ is  a normal, semi-finite, faithful weight on the commutant $\cM '$.   Associated with $\varphi$, we have a GNS-triple $(\cH, \pi, \Lambda)$, a modular conjugation $J$, modular operator $\Delta$ and modular automorphism group $\sigma$ (see \cite[p 92]{TakII}). We identify $\cM$ with $\pi(\cM)$ and omit $\pi$ in the notation. 

We turn  $(\cM, \cM_\ast)$   into a compatible couple of Banach spaces as follows. Let $\cM \cap \cM_\ast$ be the set of $x \in \cM$, for which there exists a $\varphi_x \in \cM_\ast$ such that
\[
\forall y,z \in \nphi: \langle \varphi_x, z^\ast y \rangle = \langle J x^\ast J \Lambda(y), \Lambda(z) \rangle. 
\]
For $x \in \cM \cap \cM_\ast$, we set the norm $\Vert x \Vert_{\cM \cap \cM_\ast} = \max \{ \Vert x \Vert_{\cM}, \Vert \varphi_x \Vert_{\cM_\ast} \}$, in which it becomes a Banach space. Naturally, we find two embeddings $i_\infty: \cM \cap \cM_\ast \rightarrow \cM: x \mapsto x$ and $i_1: \cM \cap \cM_\ast: x \mapsto \varphi_x$.  Dualizing the embeddings (and restricting to $\cM_\ast$) we obtain a commutative diagram:
 \begin{equation} 
 \xymatrix{
 & \cM_\ast\ar@{^{(}->}[dr]^{i_\infty^{\ast}}  & \\
   \cM \cap \cM_\ast  \ar@{^{(}->}[ur]^{i_1}\ar@{^{(}->}[dr]_{i_\infty}  & &( \cM \cap \cM_\ast )^\ast, \\
& \cM  \ar@{^{(}->}[ur]^{i_1^\ast} &}    \label{EqnLpIzu}
\end{equation}
 which turns $(\cM, \cM_\ast)$ into a compatible couple of Banach spaces. Moreover, $i_\infty^* i_1(\cM \cap \cM_\ast)$ is the intersection of $i_\infty^\ast(\cM_\ast)$ and $i_1^\ast(\cM)$, so that our notation is justified. We have $\mphi \subseteq \cM \cap \cM_\ast$. 

  We write $L^p(\cM)$ for the Connes-Hilsum $L^p$-space \cite{Hilsum}. We let $d = d\varphi/d\psi$ be the spatial derivative. For any $x \in \nphi$, we have 
\[
[xd^{\frac{1}{2p}}] \in L^{2p}(\cM) \textrm{ and } d^{\frac{1}{2p}} x^* \in L^{2p}(\cM)\]
c.f. \cite[Theorem 26]{TerpII}. The main result of \cite{TerpII} states that for $1 \leq p \leq \infty$, there exists a unique isometry $j_p: (\cM, \cM_\ast)_{1/p} \rightarrow L^p(\cM)$ such that for $x,y \in \nphi$, we have
\begin{equation}\label{Eqn-LpIso}
j_p(x^\ast y ) =  d^{\frac{1}{2p}} x^\ast  \cdot [y d^{\frac{1}{2p}}] \in L^{2p}(\cM) \cdot L^{2p}(\cM) \subseteq L^p(\cM).
\end{equation}
Moreover, for $1 \leq p < \infty$, $j_p$ is an isomorphism ($j_\infty$ is just the identity map from $\cM$ to $\cM$). The following proposition is exactly \cite[Eqn. (56)]{TerpII}.

\begin{prop}\label{Prop-InterpolationScale}
Let $(\cM, \varphi, \psi)$ be a triple as above.  $\{  (\cM, \cM_\ast), j_p: (\cM, \cM_\ast)_{1/p} \rightarrow L^p(\cM) \}$ is an interpolation scale. 
\end{prop}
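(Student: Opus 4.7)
The plan is to build both interpolation scales as instances of Terp's Proposition \ref{Prop-InterpolationScale}, then verify the six listed properties. For the first scale, I would apply Terp's construction to the triple $(\cL G, \varphi, \psi)$, where $\varphi$ is the Plancherel weight and $\psi$ is the weight on $\rho(G)''$ described in Section \ref{subsect=groupvna}; this yields $A_0 = \cL G$, $A_1 = \cL G_*$ (both embedded into $(\cL G \cap \cL G_*)^*$), and isometries $j_p$ given by \eqref{Eqn-LpIso}. For the second scale I would apply Terp's construction to the tracial triple $(B(L^2 F), \Tr, \Tr)$; everything is then explicit: $A_{0,F} = B(L^2 F)$, $A_{1,F} = \cS_1(L^2 F)$, the containment $A_{1,F}\subset A_{0,F}$ is the trace-class inclusion, and Terp's isometry $j_{p,F}$ is the natural identification $A_{1/p,F}\simeq\cS_p(L^2F)\subset B(L^2F)$. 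This trivially extends to an isomorphism $A_{0,F}\to B(L^2F)$, and since Schur multipliers are defined kernel-wise on $B(L^2F)$ and restrict compatibly to each $\cS_p$, property (i) is automatic.

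I would then define $u_F$ by prescribing it separately on $A_0$ and $A_1$. On $A_0=\cL G$, set $u_F(x) = P_F x P_F$, a contraction into $B(L^2F)$. On $A_1\cong A(G)$, for $\varphi\in A(G)$ set $u_F(\varphi) = (\varphi(ts^{-1}))_{s,t\in F}$; this operator lies in $\cS_1(L^2 F)$ with norm $\leq|F|\|\varphi\|_{A(G)}$ by the direct kernel factorization $AB^*$ used in the $p=1$ case of the proof of Theorem \ref{thm=contraction_well_defined} (which does not invoke interpolation). Compatibility of the two prescriptions on $A_0\cap A_1=\cM\cap\cM_*$ reduces, via Lemma \ref{lemma=isom_AG_L1} applied at the level of compressions, to the standard identification of the spatial derivative with the operator itself; this extends $u_F$ uniquely to a bounded map $\Sigma(\overline A)\to A_{0,F}$.

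At this point items (ii)--(iv) are essentially bookkeeping. The diagram of (ii) commutes at $p=\infty$ by definition and at $p=1$ by Lemma \ref{lemma=isom_AG_L1}; bilinear interpolation then propagates commutativity to all intermediate $p$. Item (iii) is immediate: $\lambda(f)\in\nphi$ for $f\in C_c(G)$, so $E\subset\nphi^*\nphi=\mphi\subset\cM\cap\cM_*$, and the norm density of $j_1(E)$ in $L^1(\cL G)$ follows from the standard density of $\mphi$ in $L^1$ combined with the density of $\{f^**g:f,g\in C_c(G)\}$ in $\mphi$. For (iv), given $a\in\Sigma(\overline A)$ with $u_F(a)=0$ for every relatively compact $F$, I would test $a\in(\cM\cap\cM_*)^*$ against an arbitrary element $\lambda(f)^*\lambda(g)\in\mphi$ with $f,g\in C_c(G)$: for any $F$ containing $\mathrm{supp}(f)\cup\mathrm{supp}(g)$, the pairing $\langle a,\lambda(f)^*\lambda(g)\rangle$ can be recovered from $u_F(a)$ using the compatibility of the two prescriptions, so the vanishing of all $u_F(a)$ forces $a$ to annihilate a dense subspace of $\cM\cap\cM_*$.

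The main technical obstacle is property (v). The adjoint formula is quick: since $\lambda(f)$ is bounded, $(\lambda(g)d^{1/2p})^*=d^{1/2p}\lambda(g)^*$, and Terp's formula \eqref{Eqn-LpIso} gives $j_p(\lambda(f)^*\lambda(g))^*=d^{1/2p}\lambda(g)^*\cdot[\lambda(f)d^{1/2p}]=j_p(\lambda(g)^*\lambda(f))$; thus $c=a^*$ works and $f_c=f_a^*$ satisfies the support and (when $f_a\geq0$) non-negativity claims by the explicit formula \eqref{eq=def_of_xistar}. For the product, I would simply take $d=ab=\lambda(f_a*f_b)$, observe that $E$ is closed under multiplication (a convolution of sums of $f^**g$ factors is again a sum of such factors), and invoke the multiplicativity of Terp's isometries $j_p(x)\cdot j_q(y)=j_r(xy)$ for $x,y\in\mphi$ with $1/p+1/q=1/r$; the support inclusion and non-negativity of $f_a*f_b$ are elementary. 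The subtlest point is the claim that if $f_a\geq0$, then $\varphi_a\in A(G)$ corresponding to $j_1(a)$ is pointwise non-negative. Here I would unwind Terp's defining identity $\langle i_1(\lambda(f_a)),\lambda(h^**g)\rangle=\langle J\lambda(f_a)^*J g,h\rangle$ using the modular conjugation $J\xi(s)=\overline{\xi(s^{-1})}\Delta(s)^{-1/2}$ for the Plancherel weight (so $J\lambda(f_a^*)J=\rho(f_a^*)$), then after a careful change of variables using \eqref{eq=modular1} and \eqref{eq=modular2} I expect to extract $\varphi_a(s)=\overline{f_a(s^{-1})}\Delta(s)^{-1/2}$, which is clearly pointwise non-negative when $f_a$ is. The bookkeeping in this last calculation, together with the verification that the formal identities of unbounded operators used in the adjoint computation are valid, constitutes the core technical burden of the proof.
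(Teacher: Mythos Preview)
Your proposal does not address Proposition~\ref{Prop-InterpolationScale} at all: that proposition is simply a citation of \cite[Eqn.~(56)]{TerpII}, and you invoke it as a black box while in fact sketching a proof of Theorem~\ref{thm=interpolation_BB}. Treating your write-up as an attempt at Theorem~\ref{thm=interpolation_BB}, there is a genuine gap that makes the argument fail whenever $G$ is non-unimodular.

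The error is the choice of the \emph{tracial} triple $(B(L^2F),\Tr,\Tr)$ for the second scale, which forces $j_{p,F}$ to be the identity. With that choice your map $u_F$ is not well defined on $A_0\cap A_1$: for $a=x^*y\in\mphi$ one has $j_\infty(a)=x^*y$ but $j_1(a)=\Delta^{1/2}x^*\cdot[y\Delta^{1/2}]$, so your $A_0$-prescription gives $P_F x^*y P_F$ while your $A_1$-prescription (which by Lemma~\ref{lemma=isom_AG_L1} is exactly $P_F j_1(a)P_F$) gives $\Delta_F^{1/2}(P_Fx^*yP_F)\Delta_F^{1/2}$, where $\Delta_F$ denotes multiplication by $\Delta$ on $L^2F$. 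These disagree unless $\Delta_F\equiv 1$. The sentence ``reduces \ldots\ to the standard identification of the spatial derivative with the operator itself'' is precisely where the non-unimodular case is lost: that identification holds only in the tracial setting. The paper repairs this by building the second scale from the \emph{weighted} trace $\Tr_\Delta(x)=\Tr(\Delta_F^{1/2}x\Delta_F^{1/2})$ on $B(L^2F)$, so that $j_{p,F}(x)=\Delta_F^{1/2p}x\Delta_F^{1/2p}$ and the extra $\Delta$-factors cancel; the diagram in (ii) is then verified directly on $\mphi$ for each $p$, not by the ``bilinear interpolation'' shortcut you propose (which would require the very compatibility that fails above). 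The same oversight recurs in your treatment of (v): the identity $j_p(a)j_q(b)=j_r(ab)$ you invoke is false when $\varphi$ is not a trace; the correct element is $d=\sigma_{i/2q}(a)\sigma_{i/2p}(b)$, i.e.\ $f_d=(f_a\Delta^{-1/2q})*(f_b\Delta^{-1/2p})$ rather than $f_a*f_b$, and the bulk of the paper's proof of (v) is spent justifying the resulting unbounded-operator manipulations. Your outline is essentially the (correct) unimodular special case, but the whole point of Theorem~\ref{thm=interpolation_BB} is to track the modular function.
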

\begin{rem}
In \cite{Kosaki, Izumi} it is shown that depending on a {\it complex interpolation parameter} $z \in \mathbb{C}$, the triple $(\cM, \varphi, \psi)$ gives a compatible couple $(\cM, \cM_\ast)$ by taking different embeddings of (subsets of) $\cM$ into $\cM_\ast$. These all give rise to an interpolation scale, the special case treated here being $z=0$. Since the Schur and Fourier multipliers in this paper commute with $\sigma$, there is in fact no preference for the choice of the interpolation parameter. 
\end{rem}

\begin{rem}
In the next proof we  use the following standard fact  
\cite[Chapter $\textrm{VII}.3$]{TakII} on the Plancherel weight $\varphi$ on $\cL G$. For $\xi \in L^1 G \cap L^2 G$ we have $\lambda(\xi) \in \nphi$ and the mapping $\lambda(L^1 G \cap L^2 G) \rightarrow L^2 G: \lambda(\xi) \mapsto \xi$ is a $\sigma$-weak/norm core for the GNS-map of $\varphi$. 
\end{rem}

\begin{proof}[Proof of Theorem \ref{thm=interpolation_BB}]
Consider the triple $(\cL G, \varphi, \psi)$, see Sections \ref{subsect=groupvna} and \ref{subsect=Connes-Hilsum} for notation. We will describe precisely the GNS-triple and and modular conjugation $J$ in the proof of (\ref{item=semi_injective}). Terp's construction gives a compatible couple $\overline{A} = (A_0,A_1)$ and an interpolation scale  $\{ \overline{A}, j_p: A_{1/p} \rightarrow L^p(\cL G)\}$ satisfying \eqref{Eqn-LpIso}. 

Consider also $(B(L^2F), \Tr_{\Delta}, \Tr ')$. Here, 
\[
\Tr_{\Delta}(x) = \Tr(\Delta_F^{\frac{1}{2}}  x \Delta_F^{\frac{1}{2}} ), \qquad x \in B(L^2 F)^+,
\]
where $\Delta_F$  is the bounded operator given by $\Delta$ restricted to $L^2 F$.  Furthermore, $\Tr '$ is unique state on $B(L^2F)' = \C$. In this case, $L^p(B(L^2F)) = \cS_p(L^2 F)$  and we have spatial derivative $d \Tr_\Delta / d\Tr ' = \Delta_F$.  We {\it define} the map, 
\[
j_{p,F}: B(L^2 F) \rightarrow B(L^2 F): x \mapsto \Delta_F^{\frac{1}{2p}}  x \Delta_F^{\frac{1}{2p}} . 
\]
Then, the restriction of $j_{p,F}$ to $A_{1/p,F}$ is an isometric isomorphism onto $\cS_p( L^2 F )$ (by Terp's construction). In particular, we find an interpolation scale $\{\overline A_F= (A_{0,F},A_{1,F}), j_{p,F} :A_{1/p,F} \to \cS_p(L^2(F))\}$. The map $j_{p,F} \circ j_{q,F}^{-1}$, being a Schur multiplier, commutes with Schur multipliers. In all, we conclude (\ref{item=jpF}). 

(\ref{item=diagram}) Take  $x,y \in \nphi$. Then, for all $1 \leq p \leq \infty$ we have from \eqref{Eqn-LpIso} 
\begin{equation}\label{eq=uF_compatible}
j_{p,F}^{-1} (P_F j_p(j_\infty^{-1}(x^\ast y)) P_F) =  
\Delta_F^{-\frac{1}{2p}}  (P_F \Delta^{\frac{1}{2p}} x^\ast \cdot [ y \Delta_F^{\frac{1}{2p}}] P_F) \Delta_F^{-\frac{1}{2p}}   = P_F x^\ast y P_F.
\end{equation}
In the case $p=1$, \cite[Theorem 8]{TerpII} implies that the equation $j_{1,F}^{-1} (P_F j_1(z) P_F) = P_F j_\infty(z) P_F$ holds for all $z \in A_0 \cap A_1$. We can therefore define a bounded map $u_F: A_0 + A_1 \to A_{0,F} = B(L^2 F)$ by $u_F(x_0+x_1) = P_F j_\infty(x_0) P_F + j_{1,F}^{-1} (P_F j_1(x_1) P_F)$ if $x_0\ \in A_0, x_1 \in A_1$. By the definition of $u_F$, \eqref{item=diagram} then commutes for $p=1$ or $p=\infty$.

For $1< p < \infty$, \eqref{eq=uF_compatible} proves that $j_{p,F}^{-1} (P_F j_p(z) P_F) = u_F(z)$ for all $z \in j_\infty^{-1}(\mphi)$. 
Recall that $j_p(j_\infty^{-1}(\mphi))$ is dense in $L^p(\cL G)$ (by \cite[Theorem 26]{TerpII}, Eqn. (\ref{Eqn-LpIso}) and H\"older's inequality). This concludes (\ref{item=diagram}).
 
 (\ref{item=intersection_big})   For $\xi \in L^1G \cap L^2G$ we have $\lambda(\xi) \in \nphi$. Hence $\lambda(\xi)^\ast \lambda(\eta) \in \mphi \subseteq A_0 \cap A_1$ for $\xi,\eta \in C_c(G)$. Let $x \in \cL G$ be such that $\langle j_1 \circ j_\infty^{-1}(\lambda(\xi) \lambda(\eta)), x \rangle = 0$ for all $\xi, \eta \in C_c(G)$. Then, using the notation \eqref{eq=def_of_xistar} and \cite[Eqn. (38)]{TerpII} in the fourth equality,
\[
\begin{split}
0 =& \langle j_1 \circ j_\infty^{-1}(\lambda(\xi) \lambda(\eta)), x \rangle 
=  \langle \Delta^{\frac{1}{2}} \lambda(\xi) \cdot [\lambda(\eta) \Delta^{\frac{1}{2}} ], x \rangle \\
= & \int [\lambda(\xi \Delta^{\frac{1}{2}} )\Delta^{\frac{1}{2}}]  \cdot \Delta^{\frac{1}{2}} \lambda(\eta \Delta^{-\frac{1}{2}}) \cdot x d\psi 
=  \varphi(\lambda(\eta \Delta^{-\frac{1}{2}})x\lambda(\xi \Delta^{\frac{1}{2}})) \\
  = & \langle x \Delta^{\frac{1}{2}} \xi ,   \Delta^{\frac{1}{2}}  \eta^\ast \rangle 
\end{split}
\]
This implies that $x = 0$, since $C_c(G)$ is dense in $L^2G$ . Hence $j_1(E)$ is dense in $L^1(\cL G)$.

(\ref{item=semi_injective})  Let $a =a_0+a_1 \in A_0 + A_1$ be such that $u_F(a) = 0$ for all $F \subseteq G$ relatively compact. Write $j_\infty(a_0)=x \in \cL G$ and $j_1(a_1)=\omega_\phi \in \cL G_\ast$ corresponding to $\phi \in A(G)$ and $x_\phi \in L^1(\cL G)$. Firstly, let $y = \lambda (\xi), z = \lambda(\eta)$, with $\xi, \eta \in C_c(G)$. We will use the notation \eqref{eq=def_of_xistar}. We have $\Lambda(\lambda(\xi)) = \xi \in L^2 G$ and $J \Lambda(\lambda(\xi)) = \xi^\ast \Delta^{\frac{1}{2}} \in L^2 G$. Furthermore, $y^\ast z \in \mphi \subseteq j_\infty(A_0 \cap A_1)$. By Terp's construction, $\Sigma(\overline{A}) \subseteq (A_0 \cap A_1)^\ast$. We first claim that
\[
\langle a, j_\infty^{-1}(y^\ast z) \rangle_{(A_0 \cap A_1)^\ast, A_0 \cap A_1}  = 0.
\]
Indeed, taking $F$ that contains the supports of $\eta^\ast, \xi^\ast$ and $\xi^\ast \ast \eta$,
\begin{equation}\label{Eqn=earlycomputation}
\begin{split}
& a, j_\infty^{-1}(y^\ast z) \rangle_{(A_0 \cap A_1)^\ast, A_0 \cap A_1} \\
= &
\langle J x^\ast J \Lambda(z), \Lambda(y) \rangle + \langle \phi, y^\ast z \rangle_{A(G), \cL G} \\
= &
\langle x J \Lambda(y), J \Lambda(z) \rangle + \int_F \phi(s) (\xi^\ast \ast \eta)(s) ds \\
= &
\langle x \Delta^{\frac{1}{2}} \xi^\ast,  \Delta^{\frac{1}{2}} \eta^\ast \rangle + \int_F \phi(t s^{-1}) \overline{\xi(t^{-1})} \Delta(t^{-1}) \eta(s^{-1}) \Delta(s^{-1})  ds.
\end{split}
\end{equation}
We continue the equation using Lemma \ref{lemma=isom_AG_L1},
\[
\begin{split}
& \langle a, j_\infty^{-1}(y^\ast z) \rangle_{(A_0 \cap A_1)^\ast, A_0 \cap A_1} \\
= & \langle \Delta^{\frac{1}{2}} P_F x P_F \Delta^{\frac{1}{2}}  \xi^\ast  ,  \eta^\ast  \rangle + \langle P_F x_\phi P_F   \xi^\ast, \eta^\ast \rangle \\
= & \langle (P_F x P_F + \Delta^{-\frac{1}{2}}  P_F x_\phi P_F \Delta^{-\frac{1}{2}} )\Delta^{\frac{1}{2}}  \xi^\ast  ,  \Delta^{\frac{1}{2}}  \eta^\ast  \rangle \\
  = & \langle u_F(a) \Delta^{\frac{1}{2}}  \xi^\ast  ,  \Delta^{\frac{1}{2}}  \eta^\ast  \rangle =0.
\end{split}
\]
We claim that it follows that in fact 
\[
\langle a, w \rangle_{(A_0 \cap A_1)^\ast, A_0 \cap A_1} = 0,
\]
 for every $w \in A_0 \cap A_1$. By \cite[Theorem 8]{TerpII} it is enough to show this when $w \in \mphi$. By linearity we may assume that $w = y^\ast z$ with $y,z \in \nphi$. We thus need to show that
\begin{equation}\label{Eqn=finalstatement}
\langle J x^\ast J \Lambda(z), \Lambda(y) \rangle + \langle \phi, y^\ast z \rangle_{A(G), \cL G} = 0,
\end{equation}
c.f. the first three lines of \eqref{Eqn=earlycomputation}.  Since $\lambda(L^1 G \cap L^2 G)$ is a $\sigma$-weak/norm core for $\Lambda$, there are nets $\xi_i, \eta_i$ in $L^1 G \cap L^2 G$ such that $\lambda(\xi_i) \rightarrow y$ $\sigma$-weakly and $\Vert \xi_i - \Lambda(y) \Vert_{L^2 G} \rightarrow 0$ and similarly, $\lambda(\eta_i) \rightarrow z$ $\sigma$-weakly and $\Vert \eta_i - \Lambda(z) \Vert_{L^2 G} \rightarrow 0$. In fact, we may take $\xi_i, \eta_i \in C_c(G)$. Then, clearly \eqref{Eqn=finalstatement} follows from what we have proved.

(\ref{item=intersection_big2}) We claim that the required functions are given by:
\[
f_c(s) = \overline{f_a(s^{-1})} \Delta(s^{-1}), \qquad f_d(s) = (f_a \Delta^{-\frac{1}{2q}} ) \ast (f_b \Delta^{-\frac{1}{2p}}),
\]   
so that $c=a^\ast$ and $d = \sigma_{\frac{i}{2q}}(a)\sigma_{\frac{i}{2p}}(b)$. For notational convenience, suppose that $a = u^\ast v$, with $u,v \in \nphi$. In the general case one considers a linear combination of such a decomposition. Recall that  for any two unbounded operators $x$ and $y$ we have $(xy)^\ast \supseteq x^\ast y^\ast$. Then, using \eqref{Eqn-LpIso} and \cite[Theorem 4.(1)]{Hilsum} in the third equation,
\[
j_p( c ) = j_p(a^\ast)   = \Delta^{\frac{1}{2p}} v^\ast  \cdot [ u  \Delta^{\frac{1}{2p}} ]
=  \left(  \Delta^{\frac{1}{2p}} u^\ast  \cdot [ v  \Delta^{\frac{1}{2p}} ] \right)^\ast = j_p(a)^\ast.
\]
Furthermore, write still $a = u^\ast v$ and also $b = x^\ast y$ with $u,v, x,y \in \nphi$. We claim that we have the following  equalities,
\[
\begin{split}
& j_r(d) = j_r(  \sigma_{\frac{i}{2q}}(a)  \sigma_{\frac{i}{2p}}(b)   ) =   \Delta^{\frac{1}{2r}} \sigma_{\frac{i}{2q}}(a)   \cdot   [\sigma_{\frac{i}{2p}}(b)  \Delta^{\frac{1}{2r}} ] \\
 =\!\!\!\!\!{^?} \:\:\: &     
  \Delta^{\frac{1}{2p}} u^\ast \cdot  [v \Delta^{\frac{1}{2p}}] \cdot   \Delta^{\frac{1}{2q}} x^\ast \cdot  [y \Delta^{\frac{1}{2q}}]  
=    j_p(a) j_q(b).
\end{split}
\]
Only  $=\!\!\!\!\!{^?} \:\:\:$ needs justification, the other equalities are immediate from \eqref{Eqn-LpIso}. Firstly, using \cite[Lemma 22]{TerpII},
\[
\Delta^{\frac{1}{2r}} \sigma_{\frac{i}{2q}} (a) \supseteq \Delta^{\frac{1}{2p}} a \Delta^{\frac{1}{2q}} = \Delta^{\frac{1}{2p}} u^\ast v \Delta^{\frac{1}{2q}}. 
\]
Using that the left hand side is closed and taking the closure on the right hand side, $\Delta^{\frac{1}{2r}} \sigma_{\frac{i}{2q}} (a) \supseteq \Delta^{\frac{1}{2p}} u^\ast\cdot[ v \Delta^{\frac{1}{2q}} ] $. Since (use \eqref{Eqn-LpIso} for the right-hand side) both sides are in $L^{2r}(\cL G)$ we in fact have equality by \cite[Theorem 4.(1)]{Hilsum}.  Also, using the same argument in the second equality,
\[
[ \sigma_{\frac{i}{2p}}(b) \Delta^{\frac{1}{2r}} ]^\ast = \Delta^{\frac{1}{2r}} \sigma_{\frac{i}{2p}} (b) = \Delta^{\frac{1}{2q}} y^\ast \cdot [x \Delta^{\frac{1}{2p}}] = (\Delta^{\frac{1}{2p}} x^\ast \cdot [y \Delta^{\frac{1}{2q}}] )^\ast. 
\]
So, $[ \sigma_{\frac{i}{2p}}(b) \Delta^{\frac{1}{2r}} ] = \Delta^{\frac{1}{2p}} x^\ast \cdot [y \Delta^{\frac{1}{2q}}] $.
Furthermore,
\[
[v \Delta^{\frac{1}{2q}}] \cdot \Delta^{\frac{1}{2p}} x^\ast \supseteq v \cdot \Delta^{\frac{1}{2r}} x^\ast \subseteq [v \Delta^{\frac{1}{2q}}] \cdot \Delta^{\frac{1}{2p}} x^\ast. 
\]
Since each instance of this line is in $L^{2}(\cL G)$, the inclusions are equalities by \cite[Theorem 4.(1)]{Hilsum}. In all, we have proved that
\[
\begin{split}
 \Delta^{\frac{1}{2r}} \sigma_{\frac{i}{2q}}(a) \cdot   [\sigma_{\frac{i}{2p}}(b)  \Delta^{\frac{1}{2r}} ] = &
 \Delta^{\frac{1}{2p}} u^\ast\cdot[ v \Delta^{\frac{1}{2q}} ] \cdot \Delta^{\frac{1}{2p}} x^\ast \cdot [y \Delta^{\frac{1}{2q}}] \\ = &
\Delta^{\frac{1}{2p}} u^\ast \cdot  [v \Delta^{\frac{1}{2p}}] \cdot  \Delta^{\frac{1}{2q}} x^\ast \cdot  [y \Delta^{\frac{1}{2q}}], 
\end{split}
\]
so that $f_c$ and $f_d$ have the right properties.
 
 The claims about the supports now follow automatically. For the non-negativity, write $a = \lambda(f) \lambda(g), f,g \in C_c(G)$. Let $F \subseteq G$ be compact. We have $P_F j_1(a)P_F =  ( \lambda(f)^* \Delta^{\frac{1}{2}} P_F)^\ast  ( \lambda(g) \Delta^{\frac{1}{2}} P_F) = (\phi(t^{-1}s) )_{s,t \in F}$ for some $\phi \in A(G)$, see Lemma \ref{lemma=isom_AG_L1}.  But $( \lambda(f)^\ast \Delta^{\frac{1}{2}} P_F)^\ast  ( \lambda(g) \Delta^{\frac{1}{2}} P_F)_{s,t}  =  \Delta^{\frac{1}{2}}(st^{-1})   f_a(s t^{-1})$ is nonnegative.   
\end{proof}

\subsection*{Acknowledgement}
The authors wish to thank \'Eric Ricard for useful discussions on this paper, and the referee for useful comments.

\end{document}